\def\rr{{\mathbb R}}
\def\zz{{\mathbb Z}}
\def\qq{{\mathbb Q}}
\def\pp{{\mathbb P}}
\newcommand{\Eff}{\operatorname{Eff}}
\newcommand{\CF}{\operatorname{CF}}
\newcommand{\Nef}{\operatorname{Nef}}
\newcommand{\N}{\operatorname{N}}
\newcommand{\Bl}{\operatorname{Bl}}
\newtheorem{introthm}{Theorem}
\newtheorem{intropro}{Proposition}
\newtheorem{introque}[introthm]{Question}
\newtheorem{theo}{Theorem}[section]
\newtheorem{lem}[theo]{Lemma}
\newtheorem{prop}[theo]{Proposition}
\newtheorem{cor}[theo]{Corollary}
\newtheorem{defin}[theo]{Definition}
\theoremstyle{definition}
\newtheorem{quest}[theo]{Question}
\newtheorem{rem}[theo]{Remark}
\newcommand{\bEff}{\overline{\Eff}}
\title{ \bf Cones of cycles on
blowups of $(\pp^1)^n$}
\author{Gilberto Bini, Luca Ugaglia}
\address{
Dipartimento di Matematica e Informatica,
Universit\`a degli studi di Palermo,
Via Archirafi 34,
90123 Palermo, Italy}
\email{luca.ugaglia@unipa.it,\, gilberto.bini@unipa.it}
\subjclass[2020]{Primary 14C25, Secondary 14M07, 14E99}
\keywords{Blowing-ups, cones of effective and nef cycles, Mori
dream spaces}
\thanks{The authors have been partially supported by 
``Piano straordinario per il miglioramento della qualit\`a della ricerca e dei risultati della VQR 2020-2024 - Misura A'' and research funds ``FFR'' of the University of Palermo.
Both authors are members of 
INdAM - GNSAGA}
\begin{document}

\begin{abstract}
We study cones of pseudoeffective cycles on
the blow up of $(\pp^1)^n$ at points in very general
position, proving some results concerning their structure. In particular we show that in some cases they turn out to be generated by exceptional classes and 
fiber classes relatively to the projections onto a smaller number of copies of projective lines. 
\end{abstract}

\maketitle



\section*{Introduction}

Cones of curves and divisors of an
algebraic variety $X$ are closely related to 
the birational geometry of $X$ and they
have played a central role since the 
work of Mori in 1980's. 

More recently, the interest in 
cycles of intermediate dimensions
has increased, and 
there has been significant progress in understanding the cones of such cycles (see for 
instance~\cites{DELV,FL,O}).
Nevertheless, the number of examples in which cones of effective cycles have been explicitly computed is relatively small. The most significant results concern blowups of projective space $\pp^n$
at sets of points~\cite{CLO}.
In this case, if the number $r$ of points
is small, the blow up $X$ is a toric variety, 
so that 
the pseudoeffective cone $\bEff_k(X)$ of $k$-dimensional cycles is polyhedral for any 
$1\leq k\leq n-1$, and generated by classes
of torus-invariant subvarieties 
(see for instance~\cite{P}). 
These are linear classes, i.e. classes of 
strict transforms of $k$-dimensional linear spaces 
containing some of the points, and
$k$-dimensional linear spaces contained in the 
exceptional divisors. Starting from this observation, the authors show that the
cone $\bEff_k(X)$ is still generated
by these linear classes even if the number 
of points is increased
beyond the toric range, and in particular 
they give bounds on $k,\,r$ and $n$ that
guarantee this property.

In this paper we consider the analogous problem 
for the blow up $X_r^n$ of $(\pp^1)^n$ 
at a set of $r$ very general points. 
Blowing up a small number of points (up to $2$) 
in $(\pp^1)^n$ we still have a toric variety, 
so that the cone $\bEff_k(X_r^n)$ is generated by 
classes of torus-invariant subvarieties. 
It turns out that these are the linear spaces in the exceptional divisors and the  
strict transforms of the fibers of the projections
$(\pp^1)^n\to (\pp^1)^{n-k}$ (see 
Proposition~\ref{prop:tor}). We then say that 
the cone $\bEff_k(X_r^n)$ is {\em fiber-generated} (Definition~\ref{def:fib}).

Therefore, a natural problem is to investigate the values of 
$n,\,k$ and $r$ for which $\bEff_k(X_r^n)$ is 
fiber-generated. If so, the latter is in particular 
rational polyhedral, as well as the dual cone ${\rm Nef}^k(X_r^n)$ of codimension $k$ nef cycles, thus giving more classes of varieties as in Problem 6.9 
in~\cite{DELV}.

Our first result concerns the
``extremal'' cases, i.e. $k = n-1$ or $k = 1$, when 
it is possible to give a complete characterization.
\begin{introthm}
 Let $n$ be any integer greater than one.
 \begin{enumerate}
     \item (Corollary \ref{cor:div}) $\bEff_{n-1}(X_n^r)$ is fiber-generated
     if and only if $r\leq 2$.
     \item (Theorem \ref{thm:linear}) $\bEff_1(X_n^r)$ is fiber-generated if and
     only if $r\leq n!$.
 \end{enumerate}
\end{introthm}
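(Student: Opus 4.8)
The plan is to reduce ``fiber\-generatedness'' to the nefness of a single explicit divisor, and then to control that divisor by intersecting with curves. Fix the basis $H_1,\dots,H_n,E_1,\dots,E_r$ of $N^1(X_r^n)$ (pullbacks of the point classes on the factors, together with the exceptional divisors) and the dual basis $f_1,\dots,f_n,\ell_1,\dots,\ell_r$ of $N_1(X_r^n)$, where $f_i$ is the class of a fiber of the $i$\-th projection and $\ell_j$ a line in $E_j\cong\pp^{n-1}$, so that $H_i\cdot f_{i'}=\delta_{ii'}$, $E_j\cdot\ell_{j'}=-\delta_{jj'}$ and the other pairings vanish. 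Since no two of the very general points share a coordinate, the fiber classes of Proposition~\ref{prop:tor} are exactly the $f_i$ and the $f_i-\ell_j$, and the exceptional ones are the $\ell_j$. Writing $P\subseteq N_1$ for the (polyhedral, hence closed) cone they generate, being fiber\-generated means $\bEff_1(X_r^n)=P$; as $P\subseteq\bEff_1$ always, this is equivalent to $P^\vee\subseteq\Nef(X_r^n)$, and a direct computation gives $P^\vee=\{\sum a_iH_i-\sum b_jE_j:0\le b_j\le a_i\ \forall i,j\}$.

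For part (2) I would first show that $P^\vee\subseteq\Nef$ holds \emph{iff} the single divisor $N_r:=\sum_{i=1}^n H_i-\sum_{j=1}^r E_j$ is nef. Indeed $N_r\in P^\vee$, and conversely if $N_r$ is nef then for any effective curve $C=\sum d_if_i-\sum m_j\ell_j$ (so $d_i,m_j\ge 0$) and any $D=\sum a_iH_i-\sum b_jE_j\in P^\vee$ one has, using $b_j\le\min_i a_i$ and $N_r\cdot C\ge 0$,
\[
D\cdot C=\sum a_id_i-\sum b_jm_j\ \ge\ (\textstyle\min_i a_i)\big(\sum d_i-\sum m_j\big)\ \ge\ 0 ,
\]
so $D$ is nef. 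Thus fiber\-generatedness for $1$\-cycles is equivalent to nefness of $N_r$, i.e.\ to $\sum_i d_i\ge\sum_j m_j$ for every irreducible curve. The ``only if'' direction is then immediate: since $H_i\cdot E_j=0$, $E_j\cdot E_k=0$ for $j\ne k$, $(\sum_i H_i)^n=n!$ and $E_j^{\,n}=(-1)^{n-1}$, one computes $N_r^{\,n}=n!-r$; a nef divisor has nonnegative top self\-intersection, so $r>n!$ forces $N_r$ to be non\-nef, hence $\bEff_1$ not fiber\-generated.

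The substantial direction is $r\le n!\Rightarrow N_r$ nef, and the hard point is to find a configuration for which nefness can actually be checked. My plan is to specialize the $r\le n!$ very general points to $r$ distinct points of one $S_n$\-orbit $q^{-1}(P_0)$, where $q\colon(\pp^1)^n\to\operatorname{Sym}^n\pp^1=\pp^n$ is the quotient by the symmetric group (a finite morphism of degree $n!$ with $q^*\Osh_{\pp^n}(1)=\Osh(1,\dots,1)$) and $P_0\in\pp^n$ is general, so that $q$ is \'etale over $P_0$ and $q^{-1}(P_0)$ consists of $n!$ distinct points. For such a configuration, given an irreducible curve $C$ through some $p_j\in q^{-1}(P_0)$ with multiplicities $m_j$, I would intersect with $q^{-1}(H)$ for a general hyperplane $H\ni P_0$: as $q$ is finite, $C$ is not contracted, hence not contained in $q^{-1}(H)$, while $q^{-1}(H)\sim\sum_i H_i$ is smooth at each $p_j$, giving
\[
\sum_i d_i=C\cdot q^{-1}(H)\ \ge\ \sum_j \mult_{p_j}(C)\cdot\mult_{p_j}\big(q^{-1}(H)\big)=\sum_j m_j ,
\]
so $N_r$ is nef for this special configuration. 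Finally I would invoke lower semicontinuity of nefness under specialization — equivalently, that very general points give the largest nef cone — to transfer nefness from the symmetric orbit to the very general case. The main obstacle is precisely this transfer together with the choice of a tractable special configuration: one must verify that the curve classes are identified across the family and that effectivity specializes, so that $N_r\in\Nef$ for the orbit forces $N_r\in\Nef$ for very general points.

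For part (1) the same framework applies with $k=n-1$: the fiber\-generated cone is $P_{n-1}=\operatorname{cone}(E_j,H_i,H_i-E_j)\subseteq N^1$, and for $r\le 2$ the variety is toric, so Proposition~\ref{prop:tor} yields fiber\-generatedness. For $r\ge 3$ I would exhibit an effective divisor outside $P_{n-1}$: taking a $(1,1)$\-form in the first two coordinates through the images of $p_1,p_2,p_3$ and pulling it back gives an effective divisor of class $D=H_1+H_2-E_1-E_2-E_3$, while $\Phi:=\sum_i f_i-\sum_{j\le 3}\ell_j\in P_{n-1}^\vee$ satisfies $\Phi\cdot D=2-3<0$; hence $D\notin P_{n-1}$, so $\bEff_{n-1}$ fails to be fiber\-generated as soon as $r\ge 3$.
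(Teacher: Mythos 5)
Your proposal is correct and follows essentially the same route as the paper: your reduction of fiber-generation to the nefness of the single class $\sum_i H_i-\sum_j E_j$ is exactly Lemma~\ref{lem:fg}, the case $r>n!$ is handled by the same negative top self-intersection $n!-r<0$, and the case $r\leq n!$ is proved by the same mechanism of specializing to a degree-$n!$ configuration and transferring by semicontinuity (the paper uses the $n!$ points cut out by $n$ general members of $|\Osh(1,\dots,1)|$ via the Segre embedding, which plays the same role as your orbit $q^{-1}(P_0)$ --- indeed your orbit is itself a complete intersection of the divisors $q^{-1}(H)$, and the two nefness checks are the same ``every curve avoids some member through the points'' argument). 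For part (1) the paper exhibits the identical effective class $H_1+H_2-E_1-E_2-E_3$, obtained via the birational correspondence with a blow-up of $\pp^n$ rather than your direct pullback of a $(1,1)$-curve, and likewise concludes by observing it lies outside the cone of fibers.
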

The analogous result for the blow-up 
$Y_r^n$ of
$\mathbb P^n$ at $r$ general points is the
following:
$\bEff_{n-1}(Y_r^n)$ is linearly generated 
if and only if $r\leq n+2$ (\cite{CLO}*{Thm.~2.7});
$\bEff_{1}(Y_r^n)$ is linearly generated 
if and only if $r\leq 2^n$ (\cite{CLO}*{Prop.~4.1}).

The theorem above suggests that the number $r$ of points
that can be blown up in order to have fiber-generated
cones decreases as the dimension $k$
of the cycles increases. This is confirmed by the
following result, which gives a bound for any 
$1\leq k\leq n-1$ (the characterization given for $k=1$
shows that in general this bound is not sharp).
\begin{introthm}
\label{thm:bound}
(Theorem \ref{thm:lb})
 For any $1\leq k\leq n-1$, the pseudoeffective
 cone $\bEff_k(X_n^r)$ is fiber-generated if 
 $r \leq n-k+1$.
\end{introthm}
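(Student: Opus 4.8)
The plan is to reduce the statement to a single numerical inequality relating multidegrees and multiplicities, and then to prove that inequality by intersecting with complementary coordinate fibers. Let $\phi\colon X_r^n\to(\pp^1)^n$ be the blow-down with exceptional divisors $E_1,\dots,E_r\cong\pp^{n-1}$. I would fix the natural generators of $N_k(X_r^n)$: the classes $f_I$ (for $|I|=k$) of strict transforms of fibers of the projections $(\pp^1)^n\to(\pp^1)^{n-k}$, where $I$ indexes the $k$ varying directions, and the classes $e_j$ of a linear $\pp^k\subset E_j$. Since two very general points agree in no coordinate while a type-$I$ fiber fixes the $n-k\ge 1$ coordinates in $I^c$, each such fiber passes through at most one $p_j$; hence the fiber generators of Definition~\ref{def:fib} (identified via Proposition~\ref{prop:tor}) are exactly the $e_j$, the general fiber classes $f_I$, and the strict transforms $f_I-e_j$ of the type-$I$ fiber through $p_j$. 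An irreducible $k$-cycle contained in some $E_j$ has class a nonnegative multiple of $e_j$ (as $N_k(\pp^{n-1})$ is a ray), so it suffices to treat an irreducible $V$ not contained in the exceptional locus. Writing $W=\phi(V)$, with multidegrees $d_I=d_I(W)\ge 0$ and multiplicities $m_j=\mult_{p_j}(W)\ge 0$, one has $[V]=\sum_{|I|=k}d_I f_I-\sum_j m_j e_j$.

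Expressing $[V]$ as a nonnegative combination $\sum_{I,j}a_{I,j}(f_I-e_j)+\sum_I b_I f_I+\sum_j c_j e_j$ and matching coefficients, the existence of such an expression is precisely the feasibility of a transportation problem: nonnegative $a_{I,j}$ with $\sum_I a_{I,j}=m_j$ and $\sum_j a_{I,j}\le d_I$. As every pair $(I,j)$ is admissible (a type-$I$ fiber through $p_j$ always exists), this is feasible exactly when $\sum_j m_j\le\sum_{|I|=k}d_I$. Thus the entire theorem reduces to proving, for $r\le n-k+1$ very general points, the inequality $\sum_{j=1}^r m_j\le\sum_{|I|=k}d_I$.

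To prove the inequality, for each $j$ I would intersect $W$ with a complementary coordinate fiber through $p_j$: for a $k$-subset $I$, the type-$I^c$ fiber $G_{I,j}$ through $p_j$ has dimension $n-k$ and $[W]\cdot[G_{I,j}]=d_I$. If $p_j$ is isolated in $W\cap G_{I,j}$---equivalently the tangent cone of $W$ at $p_j$ meets the coordinate subspace spanned by the directions of $I^c$ only at the origin---then positivity of intersection numbers on the smooth variety $(\pp^1)^n$ gives $m_j\le d_I$. The strategy is then to assign to the $r$ points $r$ distinct good subsets $I_1,\dots,I_r$ and to set $Z=\bigcup_j G_{I_j,j}$: distinctness of the types yields $[W]\cdot[Z]=\sum_j d_{I_j}\le\sum_{|I|=k}d_I$, while each $p_j$ is isolated in $W\cap Z$, so $\sum_j m_j\le\sum_j i(p_j;W,Z)\le[W]\cdot[Z]$. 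The case $k=1$ is transparent: there $m_j\le d_i$ for every direction $i$ in which $W$ is nondegenerate, and assigning distinct directions to the $r\le n$ points gives $\sum_j m_j\le\sum_i d_i$; in general, directions are replaced by $k$-subsets, of which the $n-k+1$ consecutive windows $\{j,\dots,j+k-1\}$ are always available.

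The delicate point is producing this system of distinct good subsets for an arbitrary, possibly very singular, $W$: when the tangent cone of $W$ at some $p_j$ is itself coordinate-degenerate, the good subsets for that point may be few, and one must verify the relevant Hall/flow condition. This is exactly where the hypothesis $r\le n-k+1$ enters---it bounds the number of simultaneous degeneracies and, together with the very general position of the points (which places the finitely many tangent cones in general position relative to the coordinate flags), guarantees the matching. I expect this combinatorial--transversality step, together with the bookkeeping of excess intersection when $p_j$ fails to be isolated for a naive choice, to be the heart of the argument; the reduction in the first two paragraphs is formal.
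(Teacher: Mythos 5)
Your opening two paragraphs are correct, and they reproduce the paper's own reduction: by Lemma~\ref{prop:ineq} (which rests on Lemma~\ref{lem:num}, exactly your transportation argument), fiber-generation is equivalent to the inequality $\sum_{|I|}d_I\geq\sum_j m_j$ for every irreducible $k$-dimensional $W$ through the points. The gap is precisely the step you defer to "the heart of the argument," and it is not a matter of bookkeeping: the per-point inequality your matching scheme needs, $m_j\leq d_{I_j}$ for a system of distinct subsets $I_j$, is \emph{false} in general. Take $n=3$, $k=2$, $r=1\leq n-k+1$, and let $W\subseteq(\pp^1)^3$ be the irreducible multidegree-$(1,1,1)$ hypersurface $\{\alpha\, y_1y_2x_3+\beta\, y_1x_2y_3+\gamma\, x_1y_2y_3+\delta\, y_1y_2y_3=0\}$ with general coefficients. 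Then $W$ contains all three coordinate fiber lines through $p_1=((1{:}0),(1{:}0),(1{:}0))$, so for \emph{every} $2$-subset $I$ the fiber $G_{I,1}$ lies entirely inside $W$: no "good" subset exists, and no excess-intersection bookkeeping can help, because $m_1=\mult_{p_1}W=2$ while $d_I=1$ for every $I$. The theorem's aggregate inequality $m_1=2\leq 3=\sum_I d_I$ holds only because one point can absorb the degree of several fiber types at once, which is exactly what a matching of points to \emph{distinct individual} subsets cannot capture. The same phenomenon persists for all $n$ and $k$ (e.g. general members of linear systems through the union of all coordinate fibers at a point are irreducible by Bertini). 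Your appeal to very generality cannot repair this either: very generality constrains the point configuration (avoiding countably many proper closed conditions), not the uncountable family of subvarieties through the points, which varies with the points and may be arbitrarily degenerate at them.

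The paper's proof handles exactly this regime by a different mechanism: it specializes $r-1$ of the points onto a fiber $F\cong(\pp^1)^{n-1}$ of $p_{\{1\}}$, keeps one point $q_r$ off $F$, and argues by induction on $n$ (and $k$), concluding by semicontinuity. The situation your method cannot treat --- multiplicity at a point exceeding the relevant degrees --- is the content of the key Lemma~\ref{lem:con}: if $b_r-\sum_{1\in I}a_I=\beta>0$, then the fiber curve $L$ through $q_r$ is contained in $Y$ with multiplicity at least $\beta$; consequently the $(k-1)$-cycle $Z=Y\cap F'$ acquires multiplicity at least $\beta$ at the trace point $\bar q_r$, and the inductive hypothesis applied to its strict transform in $X^{n-1}_r$ (legitimate since $r\leq(n-1)-(k-1)+1$) yields the inequality. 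Proving Lemma~\ref{lem:con} itself requires the machinery of Lemmas~\ref{lem:bs} and~\ref{lem:tor} and Fulton's Example~12.4.4, i.e. a controlled excess-intersection computation along $L$ rather than isolated-point intersections. Without an analogue of this containment-with-multiplicity statement, your scheme cannot be completed, because the statement it tries to prove pointwise is stronger than the theorem and fails.
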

On the negative side, we prove that for any $n$ and $k$, 
if we take a sufficiently large number $r$ of points,
the cone $\bEff_k(X_r^n)$ is not fiber-generated.
\begin{intropro}
(Proposition~\ref{prop:not})
The cone $\bEff_k(X_r^n)$ is not fiber-generated for $r > (n-k+1)!$.
\end{intropro}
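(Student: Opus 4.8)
The plan is to reduce to the curve case $k=1$ of Theorem~\ref{thm:linear} inside a lower-dimensional product. Set $m=n-k+1$ and let $p\colon(\pp^1)^n=(\pp^1)^m\times(\pp^1)^{k-1}\to(\pp^1)^m$ be the projection onto the first $m$ factors, so the fibers of $p$ are copies of $(\pp^1)^{k-1}$. Since the $r$ blown-up points $q_1,\dots,q_r$ are very general, their images $\bar q_i=p(q_i)$ are very general in $(\pp^1)^m$; and $r>(n-k+1)!=m!$, so Theorem~\ref{thm:linear} applies to $X_r^m$ and yields an irreducible curve $\bar C\subset(\pp^1)^m$ through the $\bar q_i$ whose strict transform $\tilde{\bar C}\subset X_r^m$ has class outside the cone generated by the fiber classes of $\bEff_1(X_r^m)$. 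I would then take $Z=p^{-1}(\bar C)=\bar C\times(\pp^1)^{k-1}$, an irreducible $k$-dimensional subvariety of $(\pp^1)^n$ containing every $q_i$, and let $\tilde Z\subset X_r^n$ be its strict transform. The goal is to show that the effective class $[\tilde Z]$ is not fiber-generated in the sense of Definition~\ref{def:fib}.

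To this end I would construct a linear map $\phi\colon \N_k(X_r^n)\to\N_1(X_r^m)$ carrying the fiber cone of $\bEff_k(X_r^n)$ into the fiber cone of $\bEff_1(X_r^m)$ and sending $[\tilde Z]$ to a positive multiple of $[\tilde{\bar C}]$; the existence of such a $\phi$ immediately forces $[\tilde Z]$ out of the fiber cone, since otherwise $[\tilde{\bar C}]$ would be fiber-generated in $X_r^m$. The map is built from a correspondence. On the product $W=X_r^m\times(\pp^1)^{k-1}$, which is the blow-up of $(\pp^1)^n$ along the fibers $p^{-1}(\bar q_i)$, the generic fiber of the second projection is a genuine copy of $X_r^m$ together with its exceptional divisors, so capping with such a fiber and pushing forward to the first factor defines a clean map $\psi\colon\N_k(W)\to\N_1(X_r^m)$ with $\psi[\,\tilde{\bar C}\times(\pp^1)^{k-1}\,]=[\tilde{\bar C}]$, and its effect on product classes is transparent by the K\"unneth-type structure of $W$. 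It then remains to pass between $X_r^n$ and $W$, whose blow-up centres differ: I would introduce a common resolution $V$ with morphisms $\nu\colon V\to X_r^n$ and $V\to W$ and set $\phi=\psi\circ(\text{pushforward to }W)\circ\nu^*$.

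The main obstacle is this last comparison, i.e. checking that $\phi$ really sends the fiber generators of $\bEff_k(X_r^n)$ into the fiber cone of $X_r^m$. The coordinate fiber classes, namely strict transforms of sub-products $(\pp^1)^k\subset(\pp^1)^n$, are harmless: their images under $p$ are coordinate sub-products of $(\pp^1)^m$ and track to coordinate-line classes or to zero. The delicate case is the exceptional $k$-plane class $e_i$ in $E_i\cong\pp^{n-1}$ over $q_i$, whose image is governed by the linear projection from $\pp^{n-1}$ to $\pp^{m-1}$ induced by $dp$ at $q_i$, identifying $E_i$ with $\bar E_i$ modulo the vertical directions; one must verify that the resulting class is a nonnegative combination of the exceptional line class and the coordinate-line classes of $X_r^m$. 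Controlling these exceptional contributions through $V$, $W$ and the projection is where the real work lies; once it is in place, the projection formula reduces $\phi[\tilde Z]=[\tilde{\bar C}]$ to the computation on the generic fiber. As a consistency check, the construction is sharp at both ends: for $k=1$ it is precisely the threshold $r>n!$ of Theorem~\ref{thm:linear}, and for $k=n-1$ one has $m=2$ and recovers the threshold $r>2$ of Corollary~\ref{cor:div}.
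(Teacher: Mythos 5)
Your first half is exactly the paper's construction: project onto $(\pp^1)^m$ with $m=n-k+1$, use Theorem~\ref{thm:linear} to produce a curve class violating fiber-generation on $X_r^m$, and take $Z=\bar C\times(\pp^1)^{k-1}$. The genuine gap is the second half, the linear map $\phi$, and it is precisely the step you yourself defer (``where the real work lies''). Two things are missing, and neither is routine. First, you need $\phi[\tilde Z]$ to be a positive multiple of $[\tilde{\bar C}]$, but with $\phi=\psi\circ\mu_*\circ\nu^*$ through a common resolution $V$ (say $\nu\colon V\to X_r^n$, $\mu\colon V\to W$), the class $\mu_*\nu^*[\tilde Z]$ is not simply the class of the strict transform of $Z$ in $W$: pullback of a cycle class along a blow-up produces correction terms supported over the exceptional locus, and the projection formula does not apply across a correspondence whose two blow-ups have different centres, so these terms would have to be shown to die under $\psi$. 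Second, you need the exceptional generators $E_{i,k}$ of $\CF_k(X_r^n)$ to land in $\CF_1(X_r^m)$; the image of a general $k$-plane of $E_i\cong\pp^{n-1}$ under the linear projection to $\bar E_i\cong\pp^{m-1}$ is a $k$-plane only when $2k\le n$ (otherwise it necessarily meets the indeterminacy centre $\pp^{k-2}$ and its image is all of $\bar E_i$), so even this verification splits into cases you do not treat. As written, the argument does not close.

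The functional you are trying to manufacture already exists in the paper and makes the correspondence machinery unnecessary. By Lemma~\ref{prop:ineq} (equivalently, nonnegativity against the single class $\sum_{|I|=k}H_I-\sum_{j}E_{j,n-k}$ of Lemma~\ref{lem:fg}), the cone $\bEff_k(X_r^n)$ is fiber-generated if and only if every irreducible $k$-dimensional $Y$ not contained in an exceptional divisor, with $[Y]=\sum_{|I|=n-k}a_IH_I-\sum_j b_jE_{j,k}$, satisfies $\sum_I a_I\ge\sum_j b_j$. So it suffices to compute the class of the strict transform of your $Z$ directly: its $H_I$-coefficient is $a_I$ for $I\subseteq J=\{1,\dots,m\}$ and $0$ otherwise (intersect with the complementary fibers), while its multiplicity at $q_j$ equals the multiplicity of $\bar C$ at $\bar q_j$, hence is at least $b_j$, since multiplicity is multiplicative on products and $(\pp^1)^{k-1}$ is smooth. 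Thus the coefficients of $[\tilde Z]$ satisfy $\sum_I a_I<\sum_j b_j$, violating the inequality, and the cone is not fiber-generated; this is the paper's proof. Your choice of $Z$ and the reduction to $m=n-k+1$ are the right ideas (and Lemma~\ref{prop:ineq} also supplies the irreducible curve $\bar C$ you invoke), but the conclusion should go through this numerical criterion rather than through an unconstructed map between cones.
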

For $k = 1$ or $n-1$ the above bound 
is sharp, while for the other values
of $k$ we are not able to check the
sharpness. We remark that when $n=4$, combining Theorem~\ref{thm:bound} 
with the above proposition,
only three cases remain open, i.e. 
$\bEff_2(X_r^4)$, for $r = 4,5,6$. 
In Proposition~\ref{prop:4-4} we will deal with
the case $r = 4$, showing that
$\bEff_2(X_4^4)$ is still fiber-generated. 

Finally we recall that if $X$ is a {\em Mori dream 
space} (see~\cite{HK}), then the cone $\bEff_{n-1}(X)$ is
polyhedral, but in general the cones $\bEff_k(X)$ 
are not necessarily polyhedral for $1\leq k\leq n-2$ (see for 
instance~\cite{DELV}*{Ex.~6.2}).
Nevertheless, in~\cite{CLO} it is proved that if
$X$ is the blow up of $\pp^n$ at $r$ general points, 
in all the cases in which $X$ is Mori dream,
$\bEff_k(X)$ turns out to be polyhedral for any $k$.
Therefore it makes sense to consider the analogous
question in our case.
\begin{introque}
\label{que:md}
If $X_r^n$ is a Mori dream space, 
is the cone $\bEff_k(X_r^n)$ polyhedral
for any $1\leq k\leq n-1$?
\end{introque}

We close our paper with some comments about the 
above question, showing that our results 
give a partial answer.

The authors would like to thank the referee for 
many useful remarks that helped improving the
quality of the paper.

\section{Preliminaries}
Let $X$ be a smooth projective variety of dimension $n$. 
Let $Z_k(X)$ denote the group of algebraic cycles of dimension $k$ on $X$. 
We define the space of numerical classes of $k$-cycles
\[
 N_k(X) := (Z_k(X)/\equiv)\otimes \mathbb R
\]
where $\equiv$ is the numerical equivalence. 
For any $1\leq k\leq n-1$, $N_k(X)$ 
is a finite-dimensional
vector space, and intersection gives a perfect pairing
\[
N_k(X) \times N_{n-k}(X) \to \mathbb R.
\]
Given a $k$-dimensional subvariety $Z\subseteq X$, we write $[Z]$ to denote its class in $N_k(X)$. 
We recall that if $Y$ and $Z$ are subvarieties of $X$
of dimension $k$ and $n-k$ respectively, and 
$Y\cap Z$ is a finite set, then $[Y]\cdot [Z] \geq 0$.

A class $\omega\in Z_k(X)$ is called {\em effective}
if there exist subvarieties $Z_1,\dots,Z_s\subseteq X$
and real positive numbers $\alpha_1,\dots,\alpha_s$ such that
$\omega =\sum_i\alpha_i[Z_i]$. 
We denote by $\bEff_k(X)\subseteq N_k(X)$ 
the {\em pseudoeffective cone}
of $k$-dimensional classes, i.e. the cone 
containing the (limits of) classes of 
$k$-dimensional effective cycles.

A class 
$\omega\in Z_k(X)$ is called {\em nef} if $\omega\cdot
[Y]\geq 0$, for any subvariety $Y\subseteq X$ of 
codimension $k$, or equivalenty $\omega\cdot\alpha \geq 0$,
for any effective class $\alpha\in Z_{n-k}(X)$.
We denote by $\Nef_k(X)$ the {\em nef cone}, i.e. the
cone of nef $k$-dimensional classes. If we write $\Nef^k(X)$ we refer to codimension $k$ nef classes.

\vspace{2mm}

 Let now $\Gamma$ be a set of $r$ distinct
 points $q_1,\dots,q_r$ in $(\pp^1)^n$. In what 
 follows, we will denote by 
 $\pi \colon X^n_{\Gamma}\to (\pp^1)^n$ the 
 blow up of $(\pp^1)^n$ at 
 the points in $\Gamma$, with
 exceptional divisors 
 $E_{1},\dots,E_{r}$. We recall that the points
 $q_i$ are in {\em very general position}
 (or equivalently they are {\em very general})
 if their configuration
 lies in the complement of a countable union of
 proper configurations. If this is the case, we 
 will simply write $X^n_r$ instead of $X_{\Gamma}^n$. 
 Given a subset $I \subseteq \{1,\dots,n\}$ of 
 cardinality $|I| = n-k$ we
 denote by $p_I:\, (\pp^1)^n \to (\pp^1)^{n-k}$
 the projection defined by
 \[
 ((x_{1}:y_{1}),\dots,(x_{n}:y_{n}))
 \mapsto
 ((x_{i}:y_{i}) \mid\, i\in I).
 \]
Moreover we set $H_I\in \N_k(X^n_{\Gamma})$ to be 
the class of the
pull back of a fiber of $p_I$ via the blow-up $\pi$
and $E_{j,k}\in \N_k(X^n_{\Gamma})$ to be the
class of a linear space of dimension $k$ 
in the exceptional divisor $E_j$.
Therefore, for any $1\leq k\leq n-1$, the vector space $\N_k(X^n_{\Gamma})$ has dimension $\binom{n}{k}+r$,
with basis
\[
\{H_I\,|\, |I| = n-k\}\cup \{E_{j,k}\,|\, 
1\leq j\leq r\}.
\]
When $|I| = 1$ we will simply write
$H_i$ instead of $H_{\{i\}}$.

Let $Y$ be a $k$-dimensional irreducible subvariety in
$X^n_{\Gamma}$. Reasoning as in~\cite{CLO}*{Lem.~2.3}
we have that the following holds:
\begin{itemize}
    \item[-] if $Y\subseteq E_j$ for some $1\leq j\leq r$,
    then $[Y] = b_jE_{j,k}$ for some $b_j > 0$;
    \item[-] if $Y$ is not contained in any exceptional 
    divisor then the numerical class of $Y$ is given by $[Y] = \sum_{|I| = n-k}a_IH_I -
    \sum_{j=1}^rb_jE_{j,k}$, with $a_I,b_j \geq 0$.
\end{itemize}
\begin{rem}
\label{rem:int}
From the intersection formulas on $(\pp^1)^n$ and from
the fact that every $E_j$ is isomorphic to $\pp^{n-1}$, 
we have that for any $I,J\subseteq\{1,\dots,n\}$ and 
$1\leq j\leq r$,
\[
H_I\cdot H_J = 
\left\{
\begin{array}{ll}
H_{I\cup J}, & \text{if } I\cap J = \emptyset\\
0, & \text{otherwise,}
\end{array}
\right.
\quad
(-1)^{n-k+1}E_{j}^{n-k} = E_{j,k},
\quad
E_j\cdot E_{j,k} = -E_{j,k-1}.
\]
In particular, when $|I| = n-k$
and $|J| = k$ we deduce that 
\[
H_I\cdot H_J = 
\left\{
\begin{array}{ll}
1, & \text{if } I\cup J = \{1,\dots,n\}\\
0, & \text{otherwise,}
\end{array}
\right.
\quad
H_I\cdot E_{j,n-k} = 0,
\qquad
E_{j,n-k}\cdot E_{j,k} = -1
\]
Therefore, if we choose
a suitable order on the generators, the 
matrix of the pairing 
$N_k(X_{\Gamma}^n)\times 
N_{n-k}(X_{\Gamma}^n) \to \rr$ 
is the diagonal join of $I_N$ and $-I_r$, where 
$N = \binom{n}{k}$.
\end{rem}
We recall that for any $n\geq 2$, given $n$ 
points $p_1,\dots,p_n\in\pp^n$ in very 
general position there 
exists an isomorphism in codimension $1$
 \begin{equation}
 \label{eq:1}
 \varphi\,
 \colon 
 \Bl_{p_1\dots p_n}
 \pp^n
  \dashedrightarrow
 \Bl_{q_1}(\pp^1)^n.
 \end{equation}
 Therefore, for any $r\geq 1$ we have an
 isomorphism in codimension $1$
 between the blow up of $\pp^n$
 at $n+r-1$ very general points and 
 the blow up $X_r^n$ of $(\pp^1)^n$
 at $r$ very general points. If we denote by 
 $\mathcal H,\mathcal E_1,\dots,\mathcal E_{n+r-1}$,
 a basis for the Picard group of the blow up of
 $\pp^n$, the isomorphism on the Picard groups 
 induced by $\varphi$ can
 be described as follows (see~\cite{LM})
 \[
 \left\{
 \begin{array}{llll}
  \mathcal H & \mapsto & \sum_{i=1}^n
  H_i - (n-1)E_1 &  \\
  \mathcal E_i & \mapsto & H_{n+1-i} - 
  E_1 & \text{ for } 1\leq i\leq n\\
  \mathcal E_i & \mapsto & E_{i-n+1} & \text{ for } i \geq n+1.\\
  \end{array}
 \right.
 \]
 In particular, the pseudoeffective cone
 $\bEff_{n-1}(X^n_r)$ is isomorphic
 to the pseudoeffective cone of
 the blow up of $\pp^n$ at $n+r-1$ very general points.

 Since $\varphi$ and its birational inverse contract
 varieties of codimension $\geq 2$, they can not be
 used in order to study the cones 
 $\bEff_k(X^n_r)$ when $k\leq n-2$.

\vspace{5mm}

For the purposes of what follows, we briefly recall the definition of multiplicity of a subvariety in an ambient variety, as well as that of intersection multiplicity along a subvariety.

Let $M$ be a smooth ambient variety of dimension $n$. Assume $Z$ is a subvariety of $M$. Then {\em the multiplicity $e_Z(M)$ of $M$ along $Z$} is the coefficient of the class $[Z]$ in the (total) Segre class $s(Z,M)$, i.e. the Segre class of the normal cone $C_Z M$ to $Z$ in $M$. As explained in \cite{Ful}*{Ex.~4.3.1}, this definition is equivalent to that of the multiplicity of the local ring ${\mathcal O}_{Z,M}$ given by Samuel. Since ${\mathcal O}_{Z,M}$ can be viewed as the stalk of the structure sheaf ${\mathcal O}_M$ at the generic point of $Z$, the multiplicity of $M$ along $Z$ can also be computed as the multiplicity of $M$ at the generic point of $Z$.

Let $Z_1, \ldots, Z_r$ be pure-dimensional subschemes of $M$ with 
$$
m= \sum_{j=1}^r \dim(Z_j)-(r-1)n \geq 0.
$$
If $Z$ is an $m$-dimensional smooth irreducible component of the intersection $\cap_{j=1}^r Z_j$, let $\pi: \tilde{M} \to M$ be the blow-up of $M$ along $Z$. Denote by $j:E \to \tilde{M}$ the embedding of the exceptional divisor $E$ into $\tilde{M}$. Moreover, let $\eta: E \to Z$ be the projective bundle morphism of $E$ onto $Z$, and
$\tilde{Z}_j$ the blow-up of $Z_j$ along the intersection $Z_j \cap Z$. Then Example 12.4.4. in \cite{Ful} states that
$$
i(Z, Z_1 \cdot \ldots \cdot Z_r; M) = \prod_{j=1}^r e_Z(Z_j) + q,
$$
where $i(Z, Z_1 \cdot \ldots \cdot Z_r; M)$ is the intersection multiplicity as defined in~\cite{Ful}*{Ex.~8.2.1} and $q$ is the coefficient of $[Z]$ in the cycle $\eta_*(j^*(\tilde{Z}_1 \cdot \ldots \cdot \tilde{Z}_r)) \in A_*(Z).
$

\vspace{5mm}

We are now going to prove a technical result
about the blow-up $X_1^n$ of $(\pp^1)^n$ at one
point $q_1$. In what 
follows we will denote simply by $L$
the $1$-dimensional fiber with class
$H_{\{2,\dots,n\}}-E_{1,1}$.
\begin{lem}
\label{lem:bs}
If we denote by $W_s$ the class $H_2+\dots+H_n-sE_1$, then for any 
$1\leq s\leq n-1$ the following hold.

\begin{itemize}
    \item[(i)] The base locus of the linear system $|W_s|$ 
    is the union of 
    fibers with classes $H_I-E_{1,s}$, for $I \subseteq \{2,\dots,n\}$ and $|I|=n-s$.
    \item[(ii)] The general divisor in $|W_s|$ has 
    multiplicity $s$ along the curve $L$.
    \item[(iii)] If $s\leq n-2$, the
    intersection of a general divisor in $|W_s|$ with the 
    base locus of $|W_{s+1}|$ is the 
    base locus of $|W_s|$.
\end{itemize}    
\end{lem}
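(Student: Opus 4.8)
The plan is to work in explicit affine coordinates adapted to $q_1$ and to translate the whole statement into the language of multilinear forms. Choose coordinates on $(\pp^1)^n$ so that $q_1=((0:1),\dots,(0:1))$, and set $t_i=x_i/y_i$, so that $q_1$ is the origin of the chart where all $y_i\neq 0$. Since $H_2+\dots+H_n$ is the pullback of $\Osh(0,1,\dots,1)$, the global sections of $W_s=\pi^*(H_2+\dots+H_n)-sE_1$ are identified with the sections of $\Osh(0,1,\dots,1)$ vanishing to order at least $s$ at $q_1$; after dehomogenizing these are exactly the multilinear polynomials
\[
f=\sum_{\substack{S\subseteq\{2,\dots,n\}\\ |S|\geq s}}c_S\prod_{i\in S}t_i.
\]
The divisor $D\in|W_s|$ attached to $f$ is the strict transform of $\{f=0\}$; away from $E_1$ it coincides with $\{f=0\}$, while along $E_1\cong\pp^{n-1}$ (with homogeneous coordinates $\tau_1,\dots,\tau_n$) its trace is cut out by the degree-$s$ leading form $f_s=\sum_{|S|=s}c_S\prod_{i\in S}\tau_i$. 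This dictionary is the backbone of all three parts, and I will write $V_I=\{t_i=0:i\in I\}$ for the coordinate fiber through $q_1$, whose strict transform has class $H_I-E_{1,n-|I|}$.

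For (i) I would compute the common zero locus of the monomials spanning $H^0(W_s)$. Writing $A$ for the set of indices $i\in\{2,\dots,n\}$ at which a given point takes the $q_1$-value $x_i=0$, a short combinatorial argument (every subset $S$ of size $\geq s$ must meet the vanishing coordinates) shows that all the monomials $\prod_{i\in S}t_i$ with $|S|\geq s$ vanish at a point exactly when $|A|\geq n-s$. Hence the base locus away from $E_1$ is $\bigcup_{|I|=n-s}V_I$, the union of the fibers of class $H_I-E_{1,s}$. Running the same count on the leading forms $f_s$ shows the trace on $E_1$ is $\bigcup_{|I|=n-s}\{\tau_i=0:i\in I\}$, which is precisely the intersection of those strict transforms with $E_1$; thus $E_1$ carries no extra base component and (i) follows.

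For (ii) I would compute the multiplicity at the generic point of $L=V_{\{2,\dots,n\}}$, which lies off $E_1$, so that there $D$ agrees with $\{f=0\}$. In the transverse coordinates $t_2,\dots,t_n$ the lowest-order term of $f$ is the degree-$s$ form $f_s$, which for general $c_S$ is nonzero and is independent of $t_1$; it is therefore nonvanishing at the generic point of $L$, giving $\mult_L D=s$ exactly (for special members $f_s$ could vanish, which is why generality is needed).

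Finally, for (iii) I would restrict a general $f$ to each fiber $V_I$ with $|I|=n-s-1$, $I\subseteq\{2,\dots,n\}$, which by part (i) form the base locus of $|W_{s+1}|$. Setting $t_i=0$ for $i\in I$ annihilates every monomial except the unique one indexed by $I'=\{2,\dots,n\}\setminus I$ (of size $s$), so $f|_{V_I}=c_{I'}\prod_{i\in I'}t_i$; as $c_{I'}\neq 0$ generically, $V_I\not\subseteq\{f=0\}$ and the trace $D\cap V_I$ is the reduced union $\bigcup_{i\in I'}V_{I\cup\{i\}}$ of the $s$ base fibers of $|W_s|$ lying in $V_I$, with the analogous computation of $f_s$ on $V_I\cap E_1$ again showing nothing extra on $E_1$. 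As $I$ and $i$ vary, the sets $I\cup\{i\}$ sweep out all $(n-s)$-subsets of $\{2,\dots,n\}$, so the union of these traces is exactly the base locus of $|W_s|$, proving (iii). The main obstacle throughout is the bookkeeping on $E_1$: one must verify, via the leading forms $f_s$ rather than by intersecting naively downstairs on $(\pp^1)^n$, that the exceptional divisor contributes no spurious base points in (i) and no spurious intersection components in (iii).
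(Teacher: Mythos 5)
Your proposal is correct and takes essentially the same approach as the paper: both identify the sections of $|W_s|$ with the square-free monomials of degree at least $s$ at $q_1$, prove (i) by the same combinatorial count on which coordinates must vanish, prove (ii) via the degree-$s$ leading part (the paper phrases this as membership in $\mathcal{I}_L^s$ but not $\mathcal{I}_L^{s+1}$), and prove (iii) by restricting a general section to a fiber of the base locus of $|W_{s+1}|$ and observing that exactly one monomial survives. The only differences are cosmetic: you dehomogenize in the chart at $q_1$ and additionally verify via the leading forms that $E_1$ contributes no extra base components or intersection components, a point the paper leaves implicit.
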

\begin{proof}
First of all observe that the linear system
$|H_2+\dots+H_n|$ corresponds to 
$\mathcal O(0,1,\dots,1)$ on $(\pp^1)^n$.
A basis for this linear system 
is given by the $2^{n-1}$ monomials
\[
\left\{
\prod_{i\in I, j\in I^c}
x_jy_i,|\,
I\subseteq \{2,\dots,n\}
\right\}
\]
where we set $I^c := \{2,\dots,n\}\setminus I$. 
For any integer $1\leq s\leq n-1$,
the subsystem $|W_s| = |H_2+\dots +H_n-sE_1|$
corresponds to $\mathcal O(0,1,\dots,1)\otimes \mathcal I_{q_1}^s$ (i.e. the sections 
vanishing with multiplicity
at least $s$ at $q_1$). We can suppose that the point that we blow up is
$q_1 := ((1:0),\dots,(1:0)) \in (\pp^1)^n$, so that
the system $|W_s|$ 
has the following monomial basis 
\begin{equation}
\label{eq:mon}
\left\{
\prod_{i\in I, j\in I^c}
x_jy_i,|\,
I\subseteq \{2,\dots,n\},\,
|I| \geq s
\right\}.
\end{equation}
Let us prove $(i)$. Observe that
the monomial $y_2\,\cdots\, y_n$ appears in a
basis for $|W_s|$, for any $1\leq s\leq n-1$.
In order for this monomial to vanish,
it must be $y_j=0$ for some 
$j\in\{2,\dots,n\}$. 
We substitute in the monomial $x_j\prod_{i\neq j}y_i$ and since $x_j$ can
not vanish, we get that at least another 
$y_i$ vanishes. Iterating the above reasoning, 
in order to make all the monomials vanish,
at least $n-s$ variables $y_l$ must vanish,
which gives the statement.

Let us prove $(ii)$. Since the ideal $\mathcal I_L$ of the
curve $L$ is $(y_2,\dots,y_n)$, it is enough
to observe that 
every monomial in~\eqref{eq:mon} belongs to 
the power $\mathcal I_L^s$, and there
is at least one monomial that does not
belong to $\mathcal I_L^{s+1}$.

Finally, since by $(i)$ we know the base locus
of $|W_{s+1}|$, in order to prove $(iii)$ 
it suffices to show that a general element 
in $|W_s|$ 
intersects a fiber $H_I-E_{1,s+1}$, with 
$|I|=n-s-1$, along the fibers 
$H_{I\cup\{j\}}-E_{1,s}$, for $j\notin I$.
Indeed the fiber $H_I-E_{1,s+1}$
can be described by the vanishing of all the 
variables $y_i$, for $i\in I$. If we substitute in a defining
equation for $W_s$, all the monomials but 
$\prod_{i\in I,\,j\in I^c}x_iy_j$ vanish.
Since $x_i\neq 0$ for every $i\in I$, the vanishing
of this monomial gives the statement.
\end{proof}


Let us consider the blow-up $\sigma\colon
\tilde X \to X_1^n$ along $L$, 
with exceptional divisor $E$. 
Since the normal bundle of $L$ in $X_1^n$
is $\mathcal O(-1)^{n-1}$, $E$ is isomorphic to $\pp^1\times\pp^{n-2}$. By abuse of notation, in what follows 
we will denote simply by $H_i$ 
the pull-back $\sigma^*H_i$ and by 
$E_1$ the pull-back $\sigma^* E_1$.

\begin{lem}
\label{lem:tor}
    For any $1\leq s\leq n-1$, 
    let $\tilde W_s$ be the strict transform of $W_s$. The restriction
    of $|\tilde W_s|$ to $E$ cuts on a general
    $\pp^{n-2}$ the linear system of hypersurfaces
    of degree $s$, with $n-1$ general points
    of multiplicity $s-1$.
\end{lem}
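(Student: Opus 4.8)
The plan is to compute the restriction directly in explicit affine coordinates adapted to $L$. Since $q_1 = ((1:0),\dots,(1:0))$ and $L$ is the strict transform of the fiber of $p_{\{2,\dots,n\}}$ through $q_1$, the curve $L$ varies only in the first coordinate, while $W_s = H_2+\dots+H_n-sE_1$ corresponds to $\mathcal O(0,1,\dots,1)$ and is therefore constant in that coordinate. Dehomogenizing each factor at $x_i\neq 0$ and setting $u_i = y_i/x_i$, the monomial basis~\eqref{eq:mon} of $|W_s|$ becomes the set of squarefree monomials $\prod_{i\in I}u_i$ with $I\subseteq\{2,\dots,n\}$ and $|I|\geq s$, and $L$ is cut out by $u_2=\dots=u_n=0$, with $u_1$ parametrizing it.

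First I would describe the blow-up along $L$ locally. The $n-1$ transverse directions $u_2,\dots,u_n$ get projectivized, so over the affine chart of $L$ the exceptional divisor is $\mathbb A^1\times\pp^{n-2}$; I introduce $u_i = t\,v_i$, where $t$ cuts out $E$ and $[v_2:\dots:v_n]$ are homogeneous coordinates on the fiber $\pp^{n-2}$. A general section $\sum_{|I|\geq s}c_I\prod_{i\in I}u_i$, with $c_I$ general constants, pulls back to $\sum_I c_I\,t^{|I|}\prod_{i\in I}v_i$. By Lemma~\ref{lem:bs}(ii) the general divisor has multiplicity exactly $s$ along $L$, so the strict transform $\tilde W_s$ is obtained by dividing by $t^s$; restricting to $E$ (that is, setting $t=0$) kills every term with $|I|>s$ and leaves $\sum_{|I|=s}c_I\prod_{i\in I}v_i$. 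As the $c_I$ are constant, this form does not depend on the point of $L$, so on every fiber $\pp^{n-2}$ of $E\to L$ the system $|\tilde W_s|$ cuts out precisely the linear span of the squarefree degree-$s$ monomials in $v_2,\dots,v_n$.

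It then remains to identify this span with the stated system. Let $P_2,\dots,P_n$ be the $n-1$ coordinate points of $\pp^{n-2}$. The key local computation is that a degree-$s$ form $F$ has multiplicity $\geq s-1$ at $P_j$ if and only if $F$ has degree at most $1$ in $v_j$: writing $F = \sum_d v_j^{\,s-d}F_d$ with $F_d$ homogeneous of degree $d$ in the remaining variables, dehomogenizing at $v_j$ shows the multiplicity at $P_j$ equals $\min\{d : F_d\neq 0\}$, so the condition forces $F_d=0$ for $d\leq s-2$, i.e. the $v_j$-exponent is $\leq 1$. Imposing this at all of $P_2,\dots,P_n$ simultaneously characterizes exactly the forms of degree $\leq 1$ in each variable, namely the combinations of squarefree degree-$s$ monomials. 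Finally, since $n-1$ points in general position in $\pp^{n-2}$ are projectively equivalent to the coordinate points, the system obtained above is precisely the linear system of degree-$s$ hypersurfaces with $n-1$ general points of multiplicity $s-1$.

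The main obstacle is this last identification: one must check carefully that the base conditions forced by the blow-up geometry (vanishing to the order that caps the $v_j$-degree at $1$) correspond \emph{exactly} to multiplicity $s-1$ at the $n-1$ coordinate points, with no further hidden conditions. The bookkeeping of the multiplicity along $L$, needed to divide by precisely $t^s$, rests on Lemma~\ref{lem:bs}(ii), while the passage from the coordinate points to general points uses only the projective homogeneity of configurations of $n-1$ points in $\pp^{n-2}$.
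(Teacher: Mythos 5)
Your proof is correct, and it reaches the same monomial computation as the paper by a different, more elementary route. The paper's proof is toric: it writes down the fan of $\tilde X$, introduces the Cox ring $\mathbb C[T_1,S_1,\dots,T_n,S_n,U_1,U_2]$, expresses the sections of $|\tilde W_s|$ as the monomials $\prod_{i\in I,\,j\in I^c}T_jS_i(U_1U_2)^{|I|-s}$ with $|I|\geq s$, and restricts to $E$ by setting $U_2=0$, so that exactly the monomials with $|I|=s$ survive and become the squarefree degree-$s$ monomials on $\pp^{n-2}$. You obtain the same span by a local computation over a general point of $L$, which does suffice: the statement concerns a general fiber of $E\to L$, hence one lying away from $E_1$, where $X_1^n\to(\pp^1)^n$ is an isomorphism and your chart description of the blow-up (the substitution $u_i=tv_i$, division by $t^s$, restriction $t=0$) is legitimate; the division by exactly $t^s$ is justified by Lemma~\ref{lem:bs}(ii), or directly by the fact that every monomial of the system has $t$-order $|I|\geq s$, which also guarantees that the strict transforms of members of $|W_s|$ account for the whole system $|\tilde W_s|$. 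The two computations are the same bookkeeping in different coordinates; the toric route buys a global description of $E$ and an a priori identification of the complete linear system (sections of an invariant class on a toric variety are spanned by lattice monomials), while your route avoids all toric machinery. Moreover you prove the final identification that the paper only asserts: that the span of the squarefree degree-$s$ monomials is exactly the system of degree-$s$ forms with multiplicity $\geq s-1$ at the $n-1$ coordinate points, via the clean observation that multiplicity $\geq s-1$ at $P_j$ is equivalent to $v_j$-degree at most $1$; combined with the projective equivalence of any $n-1$ linearly independent points of $\pp^{n-2}$ with the coordinate points, this upgrades the conclusion to general points, so no gap remains.
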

\begin{proof}
Since we are blowing up $(\pp^1)^n$ at the
invariant point $q_1$, and then along the
invariant curve $L$, $\tilde X$ is a toric variety. 
The fan $\Sigma\subseteq \qq^n$ of 
$(\pp^1)^n$ has $2n$ rays, generated by 
$\pm e_i$, for $1\leq i \leq n$.
We can suppose that the point $q_1$ 
corresponds to the $n$-dimensional cone 
$\langle -e_i \mid\, i \in \{1,\dots,n\}\rangle$, while $L$ corresponds
to the $(n-1)$-dimensional cone 
$\langle -e_i \mid\, i \in \{2,\dots,n\}\rangle$.
Therefore the fan $\tilde\Sigma$ of the blow up $\tilde X$ 
is obtained from $\Sigma$ by a stellar subdivision of the above two cones with respect to the rays 
$\rho_1:= -\sum_{i=1}^ne_i$ and $\rho_2
:= -\sum_{i=2}^ne_i$ respectively.
Each ray corresponds to 
an invariant divisor, and to a variable 
in the Cox ring $\mathbb C[T_1,S_1,\dots,T_n,S_n,U_1,U_2]$ of $\tilde X$,
as follows:
\[
\begin{array}{rclcll}
e_i & \Rightarrow & H_i & \Rightarrow & T_i, & 
i = 1,\dots,n\\
-e_1 & \Rightarrow & H_1  - E_1& \Rightarrow & S_1\\
-e_i & \Rightarrow & H_i - E_1 - E & 
\Rightarrow & S_i, & i = 2,\dots,n\\
\rho_1 & \Rightarrow & E_1 - E & \Rightarrow 
& U_1 &
\\
\rho_2 & \Rightarrow & E & \Rightarrow 
& U_2. &
\end{array}
\]
We can now describe the sections of 
$|\tilde W_s| = |\sum_{i=2}^n H_i - sE_1 -sE|$ in Cox coordinates as
\begin{equation}
\label{eq:cox}    
\left\{\prod_{i \in I, j\in I^c}T_jS_i (U_1
U_2)^{|I|-s}\mid\,
I\subseteq \{2,\dots,n\},\, |I| \geq s\right\}.
\end{equation}
Let us consider the 
restriction of these sections to the exceptional
divisor $E = \pp^1\times\pp^{n-2}$. 
The fan of $E$ lies inside $\mathbb Q^{n-1}$
and can be obtained taking the
images of the cones containing $\rho_2$, via 
the quotient by $\rho_2$. The quotient
map can be described via the matrix
\[
\begin{pmatrix}
1 & 0 & 0  & \dots & 0 & 0\\
0 & -1 & 0  & \dots & 0 & 1\\
0 & 0 & -1  & \dots & 0 & 1\\
\vdots & \vdots & \vdots & 
\ddots & \vdots & 1\\
0 & 0 & 0 & \dots & -1 & 1
\end{pmatrix}.
\]
The rays of the cones containing $\rho_2$ are $e_1,\rho_1,-e_2,\dots,-e_n$, so that their  
images via the quotient are
\[
\varepsilon_1,\, -\varepsilon_1,\,
\varepsilon_2,\ \dots,\,
\varepsilon_{n-1},\, -\sum_{i=2}^{n-1}\varepsilon_i,
\]
where $\varepsilon_1,\,\dots,\,
\varepsilon_{n-1}$ is the canonical basis of
$\mathbb Q^{n-1}$.
Moreover, if we substitute
$U_2 = 0$ in~\eqref{eq:cox}, we obtain
the restriction of the sections of 
$|\tilde W_s|$ to $E$, namely
\[
\left\{
\prod_{i \in I, j\in I^c}T_jS_i\mid\,
I\subseteq \{2,\dots,n\},\, |I| = s
\right\},
\]
corresponding to $\{\prod_{i\in I}z_i\mid\,|I|=s\}$
on $\pp^{n-2}$, with variables $z_2,\dots,z_n$.
These squarefree monomials are 
a basis of the linear system
of hypersurfaces of degree $s$, having multiplicity
at least $s-1$ at the $n-1$ fundamental points of $\pp^{n-2}$.

\end{proof}

\begin{lem}
 \label{lem:num}
 Let $v = (a_1,\dots,a_N,-b_1,\dots,-b_r)
 \in \zz^{N+r}$ be a vector 
 satisfying the following
 inequalities:
 \begin{enumerate}
  \item $a_i\geq 0$ and 
  $b_j \geq 0$, for any 
  $1\leq i \leq N$ and $1\leq j\leq r$;
  \item $\sum _{i=1}^N a_i \geq 
  \sum_{j=1}^r b_j$. 
 \end{enumerate}
 Then $v$ is a non-negative 
 linear combination of $e_j$ and 
 $e_i-e_j$, for $1\leq i\leq N$
 and $N+1\leq j\leq N+r$.
\end{lem}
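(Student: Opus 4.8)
The plan is to argue by induction on the non-negative integer $B := \sum_{j=1}^r b_j$, peeling off one generator of the form $e_i - e_{N+\ell}$ at a time so as to decrease $B$ while preserving both hypotheses (1) and (2). Here I read the allowed generators as the standard basis vectors $e_i$ with $1\le i\le N$ together with the differences $e_i - e_j$ with $1\le i\le N$ and $N+1\le j\le N+r$; geometrically these are exactly the fiber classes $H_I$ and the through-a-point fiber classes $H_I - E_{j,k}$, which is what makes the lemma the combinatorial engine behind fiber-generation.

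For the base case $B=0$ all $b_j$ vanish, so $v=\sum_{i=1}^N a_i e_i$ with every $a_i\ge 0$ by (1); this is visibly a non-negative combination of the generators $e_i$, $1\le i\le N$, and there is nothing more to do. For the inductive step I would assume $B>0$, pick an index $\ell$ with $b_\ell>0$, and then invoke hypothesis (2), which gives $\sum_i a_i \ge \sum_j b_j = B > 0$ and hence an index $i$ with $a_i>0$. Setting $g := e_i - e_{N+\ell}$ — an allowed generator since $1\le i\le N$ and $N+1\le N+\ell\le N+r$ — I would pass to $v' := v - g$, whose entries coincide with those of $v$ except that the $i$-th becomes $a_i-1$ and the $(N+\ell)$-th becomes $-(b_\ell-1)$.

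The crux is then to check that $v'$ again satisfies (1) and (2): because $a_i\ge 1$ and $b_\ell\ge 1$, the modified coefficients $a_i-1$ and $b_\ell-1$ are still non-negative, so (1) holds; and because both totals $\sum a_i$ and $\sum b_j$ drop by exactly one, the inequality $\sum a_i\ge\sum b_j$ is preserved, so (2) holds for $v'$ as well. Since $\sum_j b_j' = B-1 < B$, the induction hypothesis applies to $v'$ and expresses it as a non-negative combination of the generators; adding back $g$ with coefficient $1$ then exhibits $v$ itself as such a combination. The resulting coefficients are integers, which is in particular a non-negative real combination, so the cone statement follows.

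I expect no serious obstacle here: the argument is essentially routine, and the only point demanding care is verifying that a single subtraction step respects (1) and (2) \emph{simultaneously} — the key observation being that (2) is precisely what guarantees a positive $a_i$ available to ``pay for'' the reduction of a positive $b_\ell$. As an alternative framing, one can recognize the statement as the feasibility of a transportation problem, namely the existence of a non-negative matrix $(c_{i\ell})$ with column sums $\sum_i c_{i\ell}=b_\ell$ and row sums $\sum_\ell c_{i\ell}\le a_i$, feasible exactly because total demand $\sum_\ell b_\ell$ does not exceed total supply $\sum_i a_i$; but the induction above is more self-contained and avoids quoting max-flow/Gale–Hoffman type results.
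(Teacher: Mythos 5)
Your induction is essentially the paper's own proof: both arguments proceed by induction on $b=\sum_{j}b_j$, in each step choosing indices with $a_i,b_j>0$ (which exist by hypothesis (2)) and subtracting the generator $e_i-e_{N+j}$, checking that (1) and (2) survive. The one point where you deviate is your reading of the generator set. In the lemma the standalone generators are the $e_j$ with $N+1\leq j\leq N+r$ (geometrically the exceptional classes $E_{j,k}$ of Definition~\ref{def:fib}), \emph{not} the $e_i$ with $1\leq i\leq N$: the classes $H_I$ are not generators of the cone of fibers, and the downstream application in Lemma~\ref{prop:ineq} explicitly needs a decomposition into $E_{j,k}$ and $H_I-E_{j,k}$. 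Consequently your base case, $v=\sum_i a_ie_i$, writes $v$ in terms of vectors that are not among the allowed generators. The patch is one line, $e_i=(e_i-e_{N+1})+e_{N+1}$, and it is exactly what the paper does in its base case, writing $v=\sum_i a_i(e_i-e_{N+1})+\bigl(\sum_i a_i\bigr)e_{N+1}$. With that substitution your argument is complete; indeed, what you prove (membership in the cone spanned by the $e_i$, $i\leq N$, and the differences $e_i-e_j$) is formally stronger than the lemma, since each $e_i$ lies in the lemma's cone while the $e_j$ with $j>N$ do not lie in yours, so your conclusion implies the stated one once the inclusion is observed. Your transportation-problem remark is a fine alternative framing but, as you say, unnecessary.
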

\begin{proof}
We argue by induction on $b:=\sum_{j=1}^r b_j$.
If $b = 0$ we have that 
\[
 v = (a_1,\dots,a_N,0\dots,0) 
 = \sum_{i=0}^N a_i(e_i-e_{N+1}) 
+ (\sum_{i=1}^N a_i) e_{N+1}
\]
and since $a_i \geq 0$, the claim follows. 
Let us suppose the statement to be true for
$b > 0$ and let us consider 
$v := (a_1,\dots,a_N,-b_1\dots,-b_r)
\in \zz^{N+r}$ such that
$\sum_{j=1}^r b_j = b+1$. There exist
at least two indexes 
$1\leq i\leq N$ and $1\leq j\leq r$ such that
$a_i,b_j > 0$. We can write 
$v = (a_1,\dots,a_i-1,\dots,a_N,-b_1,\dots,
-(b_j-1),\dots,-b_r) + 
(e_i-e_{N+j})$.
We conclude by the induction hypothesis.

\bigskip


\end{proof}

\section{Fiber-generated effective cones}

In this section we first give the definition of fiber
generated pseudoeffective cone and, after that, we 
present the main results of our work. 

\begin{defin}
\label{def:fib}
 For any $1\leq k \leq n-1$ we define the
 {\em cone of fibers} $\CF_k(X_{\Gamma}^n)
 \subseteq \bEff_k(X_{\Gamma}^n)$ to be
 the cone generated by the classes 
 \[
 \begin{array}{ll}
  H_I-E_{i,k}, & |I| = n-k,\, 1\leq i\leq r\\
  E_{i,k}, & 1\leq i\leq r
 \end{array}
 \]
 i.e. the $k$-dimensional fibers through
 the points and the classes of $k$-dimensional
 linear spaces in the exceptional divisors.
 If the equality $\bEff_k(X^n_{\Gamma}) = 
 \CF_k(X_{\Gamma}^n)$ holds, we say that 
 the pseudoeffective cone $\bEff_k(X^n_{\Gamma})$
 is {\em fiber-generated}.
\end{defin}
Below, you find two different characterizations of fiber-generation: the first is geometric and depends on the nefness of a specific class; the second is numeric and depends on the nonnegativity of a combination of suitable integers.

\begin{lem}
\label{lem:fg}
Let $\Gamma = \{q_1,\dots,q_r\}$ be a set of distinct points
of $(\pp^1)^n$. For any $1\leq k\leq n-1$, the 
pseudoeffective cone $\bEff_k(X^n_{\Gamma})$ is fiber-generated
if and only if the class
\[
 \sum_{|I|=k}H_I - \sum_{j=1}^rE_{j,n-k}
\]
is nef.
\end{lem}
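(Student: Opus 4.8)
The plan is to exploit the perfect pairing between $N_k(X^n_\Gamma)$ and $N_{n-k}(X^n_\Gamma)$ described in Remark~\ref{rem:int}, together with the elementary fact that a polyhedral cone and its dual are related by: a cone $C$ equals $\bEff_k$ exactly when $\bEff_k^\vee = C^\vee$ inside the dual space. First I would observe that $\bEff_k(X^n_\Gamma)$ is fiber-generated precisely when it coincides with $\CF_k(X^n_\Gamma)$. Since the inclusion $\CF_k \subseteq \bEff_k$ always holds by definition, the content is the reverse inclusion $\bEff_k \subseteq \CF_k$. Dualizing, this is equivalent to $\CF_k(X^n_\Gamma)^\vee \subseteq \bEff_k(X^n_\Gamma)^\vee$, where the dual is taken with respect to the intersection pairing and lives in $N_{n-k}(X^n_\Gamma)$. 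By definition of the nef cone, $\bEff_k(X^n_\Gamma)^\vee = \Nef_{n-k}(X^n_\Gamma)$.

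**Computing the dual of the cone of fibers.**

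The second step is to compute $\CF_k(X^n_\Gamma)^\vee$ explicitly. Using the diagonal-join structure of the pairing matrix from Remark~\ref{rem:int}, I would pair a general class $\omega = \sum_{|J|=k} c_J H_J - \sum_j d_j E_{j,k}$ in $N_{n-k}$ against each generator of $\CF_k$. Pairing against $E_{i,k}$ gives $E_{i,n-k}\cdot E_{i,k} = -1$, so nonnegativity forces the coefficient $d_i \geq 0$; pairing against $H_I - E_{i,k}$ with $|I|=n-k$ gives $c_{I^c} + d_i \geq 0$, where $I^c$ is the complementary $k$-subset. Running over all generators, these conditions show that $\CF_k^\vee$ is cut out by $d_j \geq 0$ and $c_J + d_j \geq 0$ for all $J, j$; in particular the class $\sum_{|J|=k} H_J - \sum_j E_{j,n-k}$ (taking all $c_J=1$, $d_j=1$) lies on the boundary of this dual cone and is in fact one of its extremal generators.

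**Reducing nefness to the dual inclusion.**

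The heart of the argument is then to show that $\CF_k^\vee = \Nef_{n-k}$ holds if and only if the single distinguished class $\Theta := \sum_{|I|=k} H_I - \sum_j E_{j,n-k}$ is nef. One inclusion is easy: $\Nef_{n-k} = \bEff_k^\vee \subseteq \CF_k^\vee$ always, since $\CF_k \subseteq \bEff_k$. For the converse I would argue that the extremal rays of $\CF_k^\vee$ other than $\Theta$ — namely those arising from individual inequalities $c_J \geq 0$ with the remaining slack — are automatically nef because they pair nonnegatively with every effective class (the $H_J$ have nonnegative intersection with effective cycles not contained in exceptional loci, and the structure of effective classes from the two bullet points before Remark~\ref{rem:int} controls the rest). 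The main obstacle, and the crux of the lemma, is to verify that $\Theta$ being nef forces all of $\CF_k^\vee$ into $\Nef_{n-k}$: concretely, one shows that every generator of $\CF_k^\vee$ is a nonnegative combination of $\Theta$ and manifestly nef classes. Here I expect Lemma~\ref{lem:num} to do the decisive bookkeeping, rewriting an arbitrary element of $\CF_k^\vee$ as a sum of the basic nef building blocks $e_i$ and $e_i - e_j$ once the single inequality $\sum_I c_I \geq \sum_j d_j$ (the nefness of $\Theta$) is assumed. Thus fiber-generation reduces to the nefness of this one explicit class.
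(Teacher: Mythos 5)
Your framework in the first two steps is the same as the paper's: fiber-generation is equivalent to the equality $\CF_k(X^n_\Gamma)^\vee = \Nef_{n-k}(X^n_\Gamma)$, one inclusion being automatic, and the dual cone is computed from the pairing in Remark~\ref{rem:int}. But there is a sign slip in your computation of that dual cone: pairing $\omega = \sum_{|J|=k}c_JH_J - \sum_j d_jE_{j,n-k}$ against the generator $H_I - E_{i,k}$ gives $c_{I^c} - d_i \geq 0$, not $c_{I^c}+d_i \geq 0$; the dual cone is cut out by $d_j\geq 0$ and $c_J\geq d_j$. (With your inequalities, the class $\Theta := \sum_{|I|=k}H_I - \sum_{j=1}^rE_{j,n-k}$ would satisfy every constraint strictly, i.e.\ it would be \emph{interior}, contradicting your own assertion that it is extremal.) The correct list of extremal rays is the $H_J$ with $|J|=k$ together with $\Theta_S := \sum_{|I|=k}H_I - \sum_{j\in S}E_{j,n-k}$ for nonempty $S\subseteq\{1,\dots,r\}$.

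The more serious problem is that the strategy of your third step cannot be carried out. You propose to write each generator of $\CF_k^\vee$ as a nonnegative combination of $\Theta$ and manifestly nef classes. But $\Theta_S - \Theta = \sum_{j\notin S}E_{j,n-k}$, and $E_{j,n-k}$ is \emph{not} nef, since $E_{j,n-k}\cdot E_{j,k} = -1$ with $E_{j,k}$ effective. In fact, for $r\geq 2$ and $S$ a proper nonempty subset, no decomposition $\Theta_S = \Theta + (\text{nef class})$ can exist at all: since $\Nef_{n-k} = \bEff_k^\vee \subseteq \CF_k^\vee$ and $\Theta_S$ spans an extremal ray of $\CF_k^\vee$, both summands would have to be proportional to $\Theta_S$, which $\Theta$ is not. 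Your appeal to Lemma~\ref{lem:num} is also misplaced: its building blocks $e_i$ and $e_i-e_j$ correspond on the dual side to $H_J$ and $H_J - E_{j,n-k}$, which are \emph{effective} fiber classes (generators of $\CF_{n-k}$), not nef classes; that lemma is the engine of the effective-side criterion (Lemma~\ref{prop:ineq}), not of this one. What the paper does instead is a pointwise argument rather than a cone decomposition: for an irreducible $k$-dimensional $V$ not contained in an exceptional divisor, $[V] = \sum_{|J|=n-k}a_JH_J - \sum_j b_jE_{j,k}$ with $a_J,b_j\geq 0$, hence
\[
[V]\cdot \Theta_S \;=\; [V]\cdot\Theta + \sum_{j\notin S}b_j \;\geq\; [V]\cdot\Theta,
\]
while for $V$ contained in $E_j$ one has $[V] = b\,E_{j,k}$ and $[V]\cdot\Theta_S = b\,\Theta_S\cdot E_{j,k}\in\{0,b\}$. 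So nefness of the single class $\Theta$ directly forces nefness of every $\Theta_S$, which is the step your decomposition was meant to supply.
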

\begin{proof}
First of all observe that
$\bEff_k(X_{\Gamma}^n)$ is fiber-generated
if and only if the nef cone $\Nef_{n-k}(X_{\Gamma}^n)$ coincides with the dual cone of
$\CF_k(X_{\Gamma}^n)$.
Using the matrix of the intersection
product (see Remark~\ref{rem:int}), we can see that the dual of $\CF_k(X_{\Gamma}^n)$ 
has the following $\binom{n}{k}+2^r-1$ 
extremal rays
\[
 H_I \quad \text{for } |I| = k,
\quad
\text{ and }
\quad
\sum_{|I|=k}H_I - \sum_{j\in S}E_{j,n-k},
\quad
 \text{for } S\subseteq\{1,\dots,r\},
\]
so that $\bEff_k(X_{\Gamma}^n)$
is fiber-generated if and only if all the
above classes are nef. Observe that 
any class $H_I$ is nef since it 
is the intersection of the nef divisors 
$H_i$, for $i\in I$ (see for instance~\cite{FL}). 
We now claim that any class of the form
$\sum_{|I|=k}H_I - \sum_{j\in S}E_{j,n-k}$,
with $S\subseteq\{1,\dots,r\}$ is nef if and only if 
the class $\sum_{|I|=k}H_I - \sum_{j=1}^rE_{j,n-k}$ 
is.
Indeed, given any $k$-dimensional subvariety
$V\subseteq X_{\Gamma}^n$, not contained in an
exceptional divisor, we have $[V] = \sum_{|J|=n-k}a_JH_J-
\sum_{j=1}^r b_jE_{j,k} \in\bEff_{k}(X_{\Gamma}^n)$, 
where $a_J,b_j\geq 0$. For any 
$S\subseteq\{1,\dots,r\}$ we can write
\[
[V]\cdot\left(\sum_{|I|=k}H_I - \sum_{j\in S}E_{j,n-k}\right)
= 
[V]\cdot \left(\sum_{|I|=k}H_I - \sum_{j=1}^rE_{j,n-k}\right)
+\sum_{j\notin S}b_j.
\]
We conclude observing that if 
$\sum_{|I|=k}H_I - \sum_{j=1}^rE_{j,n-k}$ is nef
then the above intersection products are all non-negative.
\end{proof}

\begin{lem}
\label{prop:ineq}
The pseudoeffective cone $\bEff_k(X_{\Gamma}^n)$
is fiber-generated if and only if for any 
$k$-dimensional variety $Y$, not contained in 
an exceptional divisor and having class
$[Y] = \sum_{|I|=n-k}a_IH_I-\sum_{j=1}^rb_jE_{j,k}$,
the inequality 
$\sum_{|I|=n-k}a_I \geq \sum_{j=1}^rb_j$
holds.
\end{lem}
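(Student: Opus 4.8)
The plan is to deduce this numeric criterion from the geometric one already established in Lemma~\ref{lem:fg}. Since the generators $H_I - E_{i,k}$ and $E_{i,k}$ of $\CF_k(X^n_{\Gamma})$ are all effective, the inclusion $\CF_k(X^n_{\Gamma}) \subseteq \bEff_k(X^n_{\Gamma})$ holds unconditionally; hence fiber-generation is equivalent to the reverse inclusion, which by Lemma~\ref{lem:fg} is in turn equivalent to the nefness of the codimension-$k$ class $\omega := \sum_{|I|=k} H_I - \sum_{j=1}^r E_{j,n-k}$. By the definition of nefness it therefore suffices to control the sign of $\omega \cdot [Y]$ as $Y$ ranges over all $k$-dimensional subvarieties, and to show that this sign is governed precisely by the displayed inequality.

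The central step is a direct intersection computation using the pairing matrix of Remark~\ref{rem:int}, which is the diagonal join of $I_N$ and $-I_r$. For a $k$-dimensional $Y$ not contained in any exceptional divisor, with $[Y] = \sum_{|I|=n-k} a_I H_I - \sum_{j=1}^r b_j E_{j,k}$, I would expand $\omega \cdot [Y]$ term by term. The only surviving contributions come from the pairings $H_I \cdot H_J = 1$ for complementary index sets and $E_{j,n-k} \cdot E_{j,k} = -1$, while all mixed $H$--$E$ pairings vanish; this yields $\omega \cdot [Y] = \sum_{|I|=n-k} a_I - \sum_{j=1}^r b_j$. Thus nonnegativity of $\omega \cdot [Y]$ for such $Y$ is literally the asserted inequality $\sum_I a_I \geq \sum_j b_j$, which settles both implications simultaneously for non-exceptional subvarieties.

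The point requiring care — and the only genuine obstacle — is that nefness of $\omega$ a priori demands $\omega \cdot [Y] \geq 0$ for \emph{every} $k$-dimensional subvariety, including those contained in an exceptional divisor, whereas the statement quantifies only over the non-exceptional ones. To close this gap I would verify that the exceptional subvarieties impose no condition: if $Y \subseteq E_j$ then $[Y] = b_j E_{j,k}$ with $b_j > 0$, and the same pairing gives $\omega \cdot E_{j,k} = -(E_{j,n-k} \cdot E_{j,k}) = 1 > 0$, so $\omega \cdot [Y] > 0$ automatically. Consequently $\omega$ is nef if and only if the inequality holds for all non-exceptional $Y$, completing the equivalence. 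As an alternative route bypassing Lemma~\ref{lem:fg}, one could instead invoke Lemma~\ref{lem:num} in the coordinates of Remark~\ref{rem:int}: it identifies $\CF_k(X^n_{\Gamma})$ with the set of classes of the form $\sum_I a_I H_I - \sum_j b_j E_{j,k}$ satisfying $a_I, b_j \geq 0$ and $\sum_I a_I \geq \sum_j b_j$, after which the claim follows by testing membership of each effective class in this explicit cone.
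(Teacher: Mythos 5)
Your proof is correct, and it reorganizes the argument compared with the paper. The paper's ``only if'' direction is exactly yours: fiber-generation gives nefness of $\omega=\sum_{|I|=k}H_I-\sum_{j=1}^r E_{j,n-k}$ by Lemma~\ref{lem:fg}, and pairing $\omega$ with $[Y]$ via Remark~\ref{rem:int} yields the inequality. But for the ``if'' direction the paper argues primally rather than dually: assuming $\sum_I a_I\geq\sum_j b_j$, it applies Lemma~\ref{lem:num} with $N=\binom{n}{k}$ to write each class $[Y]$ explicitly as a non-negative combination of the generators $E_{j,k}$ and $H_I-E_{j,k}$, so every subvariety class lies in $\CF_k(X_\Gamma^n)$ and hence $\bEff_k(X_\Gamma^n)=\CF_k(X_\Gamma^n)$. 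You instead run this direction through the converse of Lemma~\ref{lem:fg}, verifying nefness of $\omega$ by checking it against the two types of subvariety classes; your computation $\omega\cdot E_{j,k}=1>0$ for subvarieties inside exceptional divisors is precisely the step the primal route never needs, since $b_jE_{j,k}$ is already a generator of the cone of fibers. Both routes are legitimate at this point of the paper; yours is more economical in that a single nefness criterion carries both implications, while the paper's is constructive, producing an actual fiber decomposition of $[Y]$. One caveat on your closing alternative: Lemma~\ref{lem:num} does not \emph{identify} $\CF_k(X_\Gamma^n)$ with the set of classes satisfying $a_I,b_j\geq 0$ and $\sum_I a_I\geq\sum_j b_j$ --- it only gives the inclusion of that set into the cone, and the cone is strictly larger (e.g.\ $E_{j,k}$ itself has $b_j=-1$ in these coordinates). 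Since the classes you test are classes of subvarieties, which do have $a_I,b_j\geq 0$, the inclusion is all the argument needs, but the phrasing should be corrected.
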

\begin{proof}
Observe that if $Y$ is not contained in 
an exceptional divisor, then 
$[Y] = \sum_{|I|=n-k}a_IH_I-\sum_{j=1}^rb_jE_{j,k}$,
with $a_I,b_j\geq 0$. If the inequality
$\sum_{|I|=n-k}a_I \geq \sum_{j=1}^rb_j$ holds,
then we can apply Lemma~\ref{lem:num} 
with $N = \binom{n}{k}$. We conclude that 
$[Y]$ is a non-negative linear combination
of $E_{j,k}$ and $H_I-E_{j,k}$, for $|I| = n-k$
and $1\leq j\leq r$, so that $\bEff_k(X_{\Gamma}^n)$
is fiber-generated.

On the other hand, if $\bEff_k(X_{\Gamma}^n)$
is fiber-generated, then by Lemma~\ref{lem:fg}
the class 
$\sum_{|J| = k}H_J-\sum_{j=1}^rE_{j,n-k}$
is nef. Taking the intersection product of
the latter with $[Y]$ we get the inequality
in the statement.
\end{proof}

Our first result is that if we blow 
up few very general points, then all the
pseudoeffective cones are fiber-generated.
\begin{prop}
\label{prop:tor}
 If $r\leq 2$, the cone 
 $\bEff_k(X^n_r)$ is fiber-generated for any 
 $n \geq 2$ and $1\leq k\leq n-1$.
 \end{prop}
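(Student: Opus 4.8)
The plan is to reduce the statement to a purely toric computation. First I would observe that for $r\leq 2$ the very general hypothesis can be replaced by a torus-invariant configuration. Indeed, very general points have pairwise distinct coordinates in each factor, so for a single point (resp.\ a pair of points) one can act by a suitable element of $(\operatorname{PGL}_2)^n\subseteq\operatorname{Aut}((\pp^1)^n)$ to arrange $q_1=((1:0),\dots,(1:0))$ and, when $r=2$, $q_2=((0:1),\dots,(0:1))$, both torus-fixed. Since any two such configurations are related by an automorphism, the isomorphism class of the blow-up, and hence the cone $\bEff_k(X^n_r)$, is unaffected. Thus I may assume $X^n_r$ is the smooth complete toric variety obtained from $(\pp^1)^n$ by the star subdivisions of the maximal cones corresponding to $q_1$ and $q_2$, exactly as in the fan description used later in the paper.

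Next I would invoke the fact (see \cite{P}) that on a smooth complete toric variety the cone $\bEff_k$ is polyhedral and generated by the classes of torus-invariant $k$-dimensional subvarieties. It therefore suffices to show that every such subvariety has class in $\CF_k(X^n_r)$, after which the inclusion $\bEff_k(X^n_r)\subseteq\CF_k(X^n_r)$, together with the reverse inclusion furnished by Definition~\ref{def:fib}, yields equality.

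The core of the argument is then to classify the torus-invariant $k$-dimensional subvarieties $Y$. Either $Y\subseteq E_j$ for some $j$, in which case $Y$ is a torus-invariant $k$-plane in $E_j\cong\pp^{n-1}$ and $[Y]=E_{j,k}$; or $Y$ is contained in no exceptional divisor, so it is the strict transform of a torus-invariant $k$-dimensional subvariety of $(\pp^1)^n$. The latter are precisely the coordinate fibers $\prod_{i\in I^c}\pp^1\times\prod_{i\in I}\{p_i\}$ with $|I|=n-k$ and each $p_i$ a torus-fixed point, all of class $H_I$. Here one uses that $q_1$ and $q_2$ differ in every coordinate while $|I|=n-k\geq 1$, so such a fiber passes through at most one of $q_1,q_2$, and since it is smooth it meets the blown-up point transversally; hence its strict transform has class $H_I$, $H_I-E_{1,k}$, or $H_I-E_{2,k}$.

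Finally I would check that each of these classes lies in $\CF_k(X^n_r)$: the classes $E_{j,k}$ and $H_I-E_{j,k}$ are generators of $\CF_k$ by definition, and $H_I=(H_I-E_{1,k})+E_{1,k}$ is a non-negative combination of them. This closes the argument. I expect the only delicate point to be the correct identification of the torus-invariant subvarieties and the computation of their classes, in particular the verification that no coordinate fiber passes through both blown-up points, rather than any genuine estimate; the nefness and numerical criteria of Lemmas~\ref{lem:fg} and~\ref{prop:ineq} are not needed for this case, since the toric generation result gives the inclusion directly.
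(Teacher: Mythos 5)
Your proposal is correct and follows essentially the same route as the paper: reduce to the torus-invariant configuration, invoke the fact that the pseudoeffective cones of a smooth complete toric variety are generated by classes of invariant subvarieties, and then check that all such classes ($E_{j,k}$, $H_I-E_{j,k}$, and $H_I$) lie in $\CF_k(X^n_2)$. The only differences are cosmetic: you classify the invariant $k$-cycles geometrically as orbit closures (fibers through torus-fixed points and linear spaces in the $E_j$) and cite~\cite{P}, whereas the paper enumerates the codimension-$k$ cones of the subdivided fan in a table and cites~\cite{Li}*{Prop.~3.1}; you also make explicit the reduction of very general points to torus-fixed ones via $(\operatorname{PGL}_2)^n$, which the paper leaves implicit.
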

\begin{proof}
If $r\leq 2$, the blow up $X^n_r$ is a toric variety.
In order to prove the statement it is enough to
deal with the case $r=2$.
By~\cite{Li}*{Prop.~3.1},
for any $1\leq k\leq n-1$, the cone 
$\bEff_k(X^n_2)$ is generated
by the classes of invariant $k$-cycles, i.e. 
the classes of cycles corresponding to the
cones of codimension $k$ in the fan of 
$X^n_2$. 

We recall that the fan $\Sigma\subseteq \qq^n$ of 
$(\pp^1)^n$ has $2n$ rays, generated by 
$\pm e_i$, for $1\leq i \leq n$.
For any $0\leq k\leq n-1$ there are 
$2^{n-k}\cdot \binom{n}k$
cones of codimension $k$ in $\Sigma$, namely
\[
 \langle \pm e_i \mid i \in I \rangle,
 \quad
|I| = n-k.
\]
If we blow up two points $q_1,\, q_2\in (\pp^1)^n$,
we can suppose that they correspond to the $n$-dimensional cones $C_1 = \langle e_i \mid\, i \in \{1,\dots,n\}\rangle$ and $C_2 = \langle -e_i \mid\, i \in \{1,\dots,n\}\rangle$ respectively. 
Therefore the fan $\Sigma'$ of the blow up $X_2^n$ 
is obtained from $\Sigma$ by a stellar subdivision of $C_1$ and $C_2$ with respect to the rays 
$\rho:=\sum_i e_i$ and $-\rho$ respectively, so that it has $2n+2$ rays.
For any $k>1$ the cones of codimension $k$
in $\Sigma'$ can be described as the cones 
of codimension $k$ of $\Sigma$, together with 
the cones of the form
$\langle e_i,\rho \mid i \in
I\rangle$ and 
$\langle -e_i,-\rho \mid i 
\in I\rangle$, for all the
subsets $I\subseteq
\{1,\dots,n\}$ such that $|I|
= n-k-1$. We summarize these cones
and the corresponding cycles in the following table.
\[
\begin{array}{rcll}
     \langle e_i\, \mid i \in I \rangle & 
     \Leftrightarrow & H_I - E_{1,k}, &
     |I| = n-k,\\
     \langle -e_i\, \mid i \in I \rangle & 
     \Leftrightarrow & H_I - E_{2,k}, &
     |I| = n-k,\\
     \langle e_i,\rho\, \mid i \in I\rangle &
     \Leftrightarrow & E_{1,k}, & 
     |I| = n-k-1,
     \\
     \langle -e_i,-\rho\,\mid i \in I\rangle &
     \Leftrightarrow & E_{2,k},& 
     |I| = n-k-1,\\
     \langle e_i,-e_j\, \mid i\in I,\, j\in J\rangle &
     \Leftrightarrow & H_{I\cup J}, &
     |I|+|J| = n-k,\, I \cap J = \emptyset.\\

\\
 \end{array}
\]
We conclude by observing that the classes above
are exactly the generators of 
the cone of fibers $\CF_k(X_2^n)$.
\end{proof}

\begin{rem}
If $\Gamma =\{q_1,q_2\}$ consists 
of $2$ points lying on a $s$-dimensional 
fiber $F$ of a projection 
$p_J\colon (\mathbb P)^n
\to (\mathbb P)^{n-s}$, with $|J| = n-s$, the
variety $X^n_{\Gamma}$ is still toric
and we can reason in the same way.
In particular $\bEff_k(X^n_{\Gamma})$ has the
the same generators as before, for $k < s$,
while for $k\geq s$ the cone $\bEff_k(X^n_{\Gamma})$
is generated by the following classes:
\[
\begin{array}{ll}
H_I - E_{1,k},\ H_I - E_{2,k}, &
     |I| = n-k,\  I\not \subset J\\
H_I - E_{1,k} - E_{2,k}, &
     |I| = n-k,\ I \subset J\\
E_{1,k},\ E_{2,k}. & \\
\\
 \end{array}
\]

\end{rem}

We immediately get the following characterization in the case of cycles 
of codimension $1$.
\begin{cor}
\label{cor:div}
For any $n\geq 2$ the cone
$\bEff_{n-1}(X^n_r)$ is fiber-generated if and only if $r\leq 2$.
\end{cor}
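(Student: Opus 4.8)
The plan is to prove the two directions separately, with the ``if'' direction following immediately from earlier results and the ``only if'' direction requiring the construction of an explicit obstruction.

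For the ``if'' direction, suppose $r\leq 2$. Then I can simply invoke Proposition~\ref{prop:tor}, which asserts that $\bEff_k(X^n_r)$ is fiber-generated for \emph{all} $1\leq k\leq n-1$ when $r\leq 2$; specializing to $k=n-1$ gives exactly what I need. This half is therefore essentially free.

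For the ``only if'' direction, I must show that if $r\geq 3$ then $\bEff_{n-1}(X^n_r)$ fails to be fiber-generated. The natural tool is Lemma~\ref{prop:ineq}: the cone is fiber-generated if and only if every codimension-$1$ subvariety $Y$ not contained in an exceptional divisor, with class $[Y]=\sum_{i}a_iH_i-\sum_{j}b_jE_{j,n-1}$, satisfies $\sum_i a_i\geq \sum_j b_j$. So it suffices to exhibit a single effective divisor violating this inequality. Here I would exploit the isomorphism in codimension $1$ recorded in~\eqref{eq:1}: since $\bEff_{n-1}(X^n_r)$ is isomorphic to the pseudoeffective cone of the blow-up of $\pp^n$ at $n+r-1$ very general points, and the dictionary $\mathcal H\mapsto \sum_i H_i-(n-1)E_1$, $\mathcal E_i\mapsto H_{n+1-i}-E_1$, $\mathcal E_i\mapsto E_{i-n+1}$ is given explicitly, I can transport a known ``non-linear'' effective divisor on the $\pp^n$ side back to $X^n_r$. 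For $r\geq 3$ the blow-up of $\pp^n$ is at $n+r-1\geq n+2$ points, which is precisely the range where $\bEff_{n-1}$ of the blow-up of $\pp^n$ ceases to be linearly generated by~\cite{CLO}*{Thm.~2.7}; the offending class there is typically a hyperplane through several of the points or, more robustly, the strict transform of a quadric through a suitable set of points.

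The main obstacle will be choosing the right witness divisor and verifying that after translation it genuinely violates $\sum_i a_i\geq\sum_j b_j$. Concretely, for three general points $q_1,q_2,q_3$ the cleanest candidate is the class obtained by pulling back a $(1,\dots,1)$-divisor and subtracting the three exceptional divisors with appropriate multiplicities, or equivalently the image under the dictionary of a hyperplane class $\mathcal H-\mathcal E_{i_1}-\cdots-\mathcal E_{i_{n+1}}$ on the $\pp^n$ side. I would compute its image, read off the coefficients $(a_i)$ and $(b_j)$, and check that $\sum a_i<\sum b_j$. Since the statement should hold uniformly for all $r\geq 3$, it is enough to treat $r=3$ and then note that any witness for $r=3$ remains a witness after pulling back to $X^n_r$ for larger $r$ (the extra exceptional divisors only make the inequality harder to satisfy). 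The delicate point is ensuring the chosen class is represented by an \emph{irreducible} subvariety not contained in an exceptional divisor, so that Lemma~\ref{prop:ineq} applies directly; I expect this to follow from the very general position of the points together with the explicit monomial descriptions available from Lemma~\ref{lem:bs}.
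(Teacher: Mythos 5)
Your ``if'' direction is exactly the paper's: quote Proposition~\ref{prop:tor} and specialize to $k=n-1$. The gap is in the ``only if'' direction, and it is genuine. Your framing rests on the claim that for $r\geq 3$ the corresponding blow-up of $\pp^n$ at $n+r-1\geq n+2$ points ``is precisely the range where $\bEff_{n-1}$ ceases to be linearly generated by \cite{CLO}*{Thm.~2.7}''. That theorem says the opposite boundary: linear generation holds if and only if the number of points is at most $n+2$. So in the crucial case $r=3$, the $\pp^n$-side cone (blow-up at exactly $n+2$ points) is \emph{still} linearly generated, and there is no ``non-linear'' effective divisor for you to transport; your strategy produces no witness in the one case that must be settled. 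The conceptual point being missed is that fiber-generation and linear generation are different notions of generation for the same (isomorphic) cone, with different thresholds: the dictionary sends some \emph{linear} generators to classes that are not nonnegative combinations of fiber classes, which is exactly why fiber-generation already fails at $r=3$ while linear generation still holds there.

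Your concrete candidate also fails on its own terms: the class $\mathcal H-\mathcal E_{i_1}-\cdots-\mathcal E_{i_{n+1}}$ of a hyperplane through $n+1$ of the points is not effective, since $n+1$ very general points of $\pp^n$ lie on no hyperplane ($h^0=0$); a non-effective class cannot violate Lemma~\ref{prop:ineq}. What works, and is what the paper does, is to take a hyperplane through only $n$ of the points, chosen so that its class transports to one involving all three exceptional divisors: with the dictionary of~\eqref{eq:1},
\[
\mathcal H-\mathcal E_1-\cdots-\mathcal E_{n-2}-\mathcal E_{n+1}-\mathcal E_{n+2}
\;\longmapsto\;
H_1+H_2-E_1-E_2-E_3 .
\]
This class is effective on $X^n_3$ (it is the image of an honest linear generator under the isomorphism in codimension one, so effectivity is preserved), and it is not in $\CF_{n-1}$ because its coefficients satisfy $\sum_i a_i=2<3=\sum_j b_j$; viewing the same class on $X^n_r$ for any $r\geq 3$ finishes the argument. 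So your reduction to $r=3$ and the idea of reading off coefficients through the dictionary are sound, but the witness you name does not exist as an effective divisor, and the CLO threshold you invoke is the wrong one for this statement.
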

\begin{proof}
 By Proposition~\ref{prop:tor} we only need
 to show that if $r\geq 3$, the cone
 $\bEff_{n-1}(X^n_r)$ is not fiber-generated.
 The class 
 $H_1 + H_2 - E_1 - E_2 - E_3$ corresponds to
 a hyperplane of $\pp^n$ passing through $4$
 points via the map $\varphi$ described in~\eqref{eq:1}, 
 and in particular it is effective.
 We conclude observing that 
 $H_1 + H_2 - E_1 - E_2 - E_3$ does not 
 belong to the cone of fibers 
 $\CF_{n-1}(X_2^n)$.
\end{proof}

In the case of curves, it is possible to 
give a sharp bound for fiber generation, 
by means of the following.

\begin{theo}
 \label{thm:linear}
 For any $n\geq 2$, the 
 cone $\overline{\Eff}_1(X^n_r)$ is 
 fiber-generated
 if and only if $r\leq n!$.
\end{theo}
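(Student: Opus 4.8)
The plan is to use the characterization from Lemma~\ref{prop:ineq}: the cone $\overline{\Eff}_1(X^n_r)$ is fiber-generated precisely when every irreducible curve $Y$ not contained in an exceptional divisor, with class $[Y] = \sum_{|I|=n-1} a_I H_I - \sum_{j=1}^r b_j E_{j,1}$, satisfies $\sum a_I \geq \sum b_j$. For curves, the index sets $I$ with $|I| = n-1$ are complements of single points, so the sum $\sum_{|I|=n-1} a_I$ is just the total degree of $Y$ across the $n$ projection directions, while $\sum b_j$ counts the multiplicities of $Y$ at the blown-up points. So the whole statement reduces to a bound comparing total multidegree to total multiplicity, and the critical threshold should be $r = n!$.

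For the ``if'' direction (fiber-generation when $r \leq n!$), I would try to show that no irreducible curve through the points can have $\sum b_j > \sum a_I$. The natural tool is the isomorphism in codimension one~\eqref{eq:1} relating $X^n_r$ to the blow-up of $\pp^n$ at $n+r-1$ general points, together with the translation dictionary on Picard groups — but since that map only controls divisors, not curves, I would instead argue directly. The key geometric input should be that a very general point imposes independent conditions and that an irreducible curve of given multidegree passing through too many very general points with positive multiplicity forces a contradiction via a dimension count: the space of curves of bounded multidegree has a computable dimension, and requiring passage through $r$ very general points with multiplicity one cuts this down, so for $r$ beyond a threshold no such curve exists. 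The number $n!$ strongly suggests that the extremal curves are the \emph{diagonal} or \emph{multisection} curves — rational curves of multidegree $(1,1,\dots,1)$ — and that $n!$ counts something like the number of such minimal curves or a permutation-indexed family passing through a maximal configuration of points.

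For the ``only if'' direction ($r > n!$ implies not fiber-generated), I would exhibit an explicit effective curve $Y$ violating the inequality. The most promising candidate is a curve of total multidegree just small enough relative to the number of points it passes through: one wants an irreducible curve with $\sum a_I < \sum b_j = r$. I expect the extremal example to be a rational curve of multidegree $(1,\dots,1)$ (total degree $n$) passing through $n+1$ points, or more generally the image of $\pp^1$ under a map whose coordinates are degree-$d$ rational functions, passing through $d^? $ points; optimizing the ratio of points-passed-through to total degree is exactly where the factorial $n!$ should emerge. Concretely, I would look for a curve where through a configuration governed by the symmetric group $S_n$ one can route a single rational curve through $n!+1$ points with controlled multidegree, pushing $\sum b_j$ strictly above $\sum a_I$.

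The main obstacle will be the ``if'' direction and, within it, pinning down exactly why $n!$ is the sharp threshold. Proving that \emph{every} irreducible curve respects $\sum a_I \geq \sum b_j$ when $r \leq n!$ requires understanding which multidegrees admit irreducible curves passing through many very general points — this is essentially a statement about the geometry of very general point configurations and interpolation, and the very general hypothesis must be used to rule out special low-degree curves. I anticipate that the heart of the argument is a careful degeneration or a parameter-count combined with an induction on $n$ (using the technical Lemmas~\ref{lem:bs} and~\ref{lem:tor} about the linear systems $|W_s|$ on $X_1^n$ to control how fibers and their base loci interact), where one bounds the maximal number of very general points lying on an irreducible curve of each admissible multidegree, and the factorial appears as the count of monomial/permutation-type extremal configurations.
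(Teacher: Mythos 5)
Your proposal is a program rather than a proof, and both of its concrete suggestions have gaps that the paper's actual argument avoids entirely. The paper does not use Lemma~\ref{prop:ineq} or any interpolation/dimension count. Instead it uses the nefness criterion of Lemma~\ref{lem:fg}: fiber-generation for $k=1$ is equivalent to nefness of the single divisor class $D = H_1+\dots+H_n-E_1-\dots-E_r$. For the ``if'' direction the key idea --- which is missing from your plan --- is a \emph{specialization} of the points: embed $(\pp^1)^n\subseteq \pp^{2^n-1}$ by the Segre map and take $\Gamma = W_1\cap\dots\cap W_n$ for $n$ general hyperplane sections; since the Segre variety has degree $n!$, this $\Gamma$ consists of exactly $n!$ points. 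For this configuration $D$ is visibly nef: each proper transform $\tilde W_i$ lies in $|D|$, and any curve $C$ not contained in an exceptional divisor must miss some $\tilde W_i$ (otherwise its image would lie in the finite set $\cap_i W_i$), so $D\cdot C\geq 0$. Semicontinuity then transfers nefness to very general points. This is where $n!$ really comes from --- it is the degree of the Segre embedding, equivalently the coefficient in $(H_1+\dots+H_n)^n = n!\,H_{\{1,\dots,n\}}$ --- not a count of ``permutation-indexed'' curves as you conjecture. Your proposed direct dimension count would require bounding, for \emph{every} multidegree and \emph{every} multiplicity vector $(b_j)$, which irreducible curves pass through very general points; this is an open-ended interpolation problem with multiplicities, and nothing in your sketch indicates how to close it.

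Your ``only if'' direction also fails as stated. The candidate you propose --- a rational curve of multidegree $(1,\dots,1)$ through $n+1$ very general points --- does not exist for $n\geq 3$: the family of such curves has dimension $3n-3$ and each point imposes $n-1$ conditions, so such a curve passes through at most $3$ general points (and even if it existed it would only help when $n+1>n$, i.e., it is the right shape of example only for $n=2$). The paper again avoids constructing any curve: if $r>n!$ then $D^n = n!-r<0$, and a nef divisor class has non-negative top self-intersection, so $D$ is not nef and Lemma~\ref{lem:fg} immediately gives that $\bEff_1(X_r^n)$ is not fiber-generated. In short, the two missing ideas are (a) the complete-intersection specialization of the $n!$ points plus semicontinuity, and (b) the top self-intersection computation $D^n=n!-r$; without them neither direction of your outline goes through.
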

\begin{proof}
 We argue as in the proof 
 of~\cite{CLO}*{Prop.~4.1}. 
 Let us consider
 the embedding $(\pp^1)^n\subseteq
 \pp^{2^n-1}$ and let us take $n$ general
 hyperplane sections $W_1,\dots,W_n$. 
 Since the $W_i$ are general, 
 $\Gamma:= W_1\cap \dots \cap W_n$
 consists of $n!$ distinct points of $(\pp^1)^n$.
 We claim that the class 
 $D : = H_1+\dots+H_n-E_1-\dots -E_{n!}$ is nef
 on $X_{\Gamma}^n$.
 Indeed, given any curve $C$ on $X_{\Gamma}^n$, 
 not contained in an exceptional divisor, there
 exists at least one $W_i$ whose proper
 transform does not contain $C$. Since the class of $W_i$ is $D$, we get $D\cdot C \geq 0$
 and the claim follows. Arguing by semicontinuity
 we deduce that the class
 $H_1+\dots+H_n-E_1-\dots -E_{n!}$ is nef on
 $X^n_{n!}$ too.
 
 Therefore, if $r\leq n!$, the class
 $H_1+\dots+H_n-E_1-\dots -E_r$ is nef 
 on $X_r^n$, so that, by Lemma~\ref{lem:fg},
 $\bEff(X_r^n)$ is fiber-generated.
 If otherwise $r > n!$, the above class is not 
 nef since its $n$-th power is negative,
 and we conclude again by means of 
 Lemma~\ref{lem:fg}.
\end{proof}
Going back to any $1\leq k\leq n-1$, in order to prove 
our general bound (Theorem~\ref{thm:bound}) we need 
the following results
about the blow up $X_1^n$ of $(\pp^1)^n$
at one point $q_1$. We recall that 
$L$ is the curve with class  
$H_{\{2,\dots,n\}} - E_{1,1}$ (i.e. the 
strict transform of the fiber of the 
projection $p_{2,\dots,n}\colon (\pp^1)^n
\to (\pp^1)^{n-1}$, passing through $q_1$).

\begin{lem}
\label{lem:con}
 Let $Y$ be a purely $k$-dimensional 
 subvariety on the blow up $X_1^n$
 of $(\pp^1)^n$ at a point $q_1$, 
 with class
 $[Y] = \sum_{|I| = n-k}a_IH_I -
 b_1E_{1,k}$. 
 If 
 $b_1 - \sum_{1\in I}a_I  = \beta > 0$, 
 then the curve $L$
 is contained in $Y$ with multiplicity
 at least $\beta$. 
\end{lem}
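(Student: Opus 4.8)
The plan is to bound $\mult_L Y$ from below by exhibiting $L$ as an excess component of a complete intersection cut out on $Y$ by divisors that contain $L$ in their base locus, and then reading off the forced multiplicity along $L$ from the discrepancy $\beta$.

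First I would translate the hypothesis into intersection numbers via the pairing of Remark~\ref{rem:int}. Since $H_I\cdot E_{1,n-k}=0$ and $E_{1,k}\cdot E_{1,n-k}=-1$, one gets $b_1=[Y]\cdot E_{1,n-k}$; and since $H_I\cdot H_J=1$ exactly when $J=I^c$ (for $|I|=n-k$, $|J|=k$) while $E_{1,k}\cdot H_J=0$, one gets $[Y]\cdot H_J=a_{J^c}$, whence $\sum_{1\in I}a_I=[Y]\cdot\sum_{J\subseteq\{2,\dots,n\},\,|J|=k}H_J$. Therefore
\[
\beta=[Y]\cdot\Big(E_{1,n-k}-\sum_{\substack{J\subseteq\{2,\dots,n\}\\ |J|=k}}H_J\Big),
\]
and this is the quantity that must be matched against a geometric count of the sheets of $Y$ running along $L$.

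Next I would realize $L$ as an excess component of an intersection. By Lemma~\ref{lem:bs}(i) the curve $L$ lies in the base locus of $|W_1|=|H_2+\dots+H_n-E_1|$, and by Lemma~\ref{lem:bs}(ii) a general member has multiplicity $1$ along $L$. Choosing $k-1$ general divisors $D_1,\dots,D_{k-1}\in|W_1|$, the dimension count $\dim Y+(k-1)(n-1)-(k-1)n=1$ shows that $L$ is the expected one-dimensional locus, and since $L\subseteq D_i$ for every $i$, the curve $L$ is a smooth irreducible component of $Y\cap D_1\cap\dots\cap D_{k-1}$ precisely when $L\subseteq Y$. Applying the intersection-multiplicity formula recalled in the Preliminaries (\cite{Ful}*{Ex.~12.4.4}) with $Z=L$ and $Z_1=Y$, $Z_2=D_1,\dots,Z_k=D_{k-1}$, and using $e_L(D_i)=\mult_L D_i=1$, gives
\[
i(L,\,Y\cdot D_1\cdots D_{k-1};\,X_1^n)=\mult_L Y+q,
\]
where $q$ is the correction coming from the term $\eta_*\big(j^*(\tilde Y\cdot\tilde D_1\cdots\tilde D_{k-1})\big)$ on the blow-up $\sigma\colon\tilde X\to X_1^n$ along $L$ introduced above.

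Finally I would compute the class $[Y]\cdot W_1^{\,k-1}\in N_1(X_1^n)$ explicitly, expanding $W_1^{\,k-1}$ as in the proof of Lemma~\ref{lem:bs} (using $H_iH_j=H_{\{i,j\}}$, $H_i^2=0$, and the powers of $E_1$ from Remark~\ref{rem:int}), and isolate the coefficient of $[L]=H_{\{2,\dots,n\}}-E_{1,1}$. Because the $D_i$ are general, every component of $Y\cap D_1\cap\dots\cap D_{k-1}$ distinct from $L$ is an effective $1$-cycle contributing non-negatively, so the part of the intersection number carried by the base locus is forced onto $L$; feeding in the identity of the first step (which measures exactly the excess of $b_1$ over the mobile contribution $\sum_{1\in I}a_I$) should then yield $\mult_L Y\ge\beta$, and in particular $L\subseteq Y$ whenever $\beta>0$.

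The main obstacle is the bookkeeping of the two error terms pulling in opposite directions: the correction $q$ in Fulton's formula and the contribution of the residual components of the intersection. To pin down the residual I expect to use Lemma~\ref{lem:bs}(iii), which controls how the base loci of the systems $|W_s|$ nest, so that $L$ is the only positive-dimensional excess and the remaining base-locus contribution is accounted for by the fiber classes $H_I-E_{1,s}$; the delicate point is then to show that $q$ does not consume the slack between the intersection number and $\beta$, which is where the toric description of $E\cong\pp^1\times\pp^{n-2}$ and the restriction computation of Lemma~\ref{lem:tor} should be brought to bear.
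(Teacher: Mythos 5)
Your proposal follows the paper's skeleton (realize $L$ as an excess component of an intersection of $Y$ with $k-1$ divisors containing $L$, apply Fulton's Ex.~12.4.4 with the correction term $q$, kill $q$ via Lemma~\ref{lem:tor}), but it contains a genuine gap in the choice of the cutting divisors: you take all $k-1$ divisors $D_i$ general in the \emph{same} system $|W_1|$, whereas the paper takes one general member from each of $|W_1|,|W_2|,\dots,|W_{k-1}|$ and then pairs the resulting $1$-cycle with $W_k$. This difference is fatal to the numerics. Since $H_i\cdot E_1=0$, one has
\[
W_1^{\,k}=\Bigl(\sum_{i=2}^nH_i\Bigr)^{k}+(-E_1)^k
=k!\sum_{\substack{J\subseteq\{2,\dots,n\}\\|J|=k}}H_J\;-\;E_{1,n-k},
\]
so the $H$-part is inflated by $k!$ relative to the $E$-part, and the only intersection number your configuration can produce is $Y\cdot W_1^{\,k}=k!\sum_{1\in I}a_I-b_1$. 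Your step-1 identity instead involves the \emph{balanced} class $\sum_J H_J-E_{1,n-k}$, which equals $\tfrac{1}{k!}\,W_1W_2\cdots W_k$ (the staircase product, where $\prod_{s=1}^k(-sE_1)=(-1)^kk!\,E_1^k$ supplies the matching $k!$ on the $E$-side); it is \emph{not} proportional to any power of $W_1$. Concretely, the final pairing in your setup must be against another general member of $|W_1|$ (whose base locus is exactly $L$, with $W_1\cdot L=-1$), and nonnegativity of the residual components then only gives
\[
i\bigl(L,\,Y\cdot D_1\cdots D_{k-1};X_1^n\bigr)\;\ge\;-\,Y\cdot W_1^{\,k}\;=\;b_1-k!\sum_{1\in I}a_I,
\]
which is weaker than $\beta=b_1-\sum_{1\in I}a_I$ by exactly the factor $k!$. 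Whenever $\sum_{1\in I}a_I<b_1\le k!\sum_{1\in I}a_I$ your method detects no negativity at all: e.g.\ on $X_1^3$ with $k=2$ take an irreducible surface of class $2H_1+3H_2+3H_3-3E_{1,2}$ (multidegree $(2,3,3)$ with a triple point); here $\beta=1$, so the lemma forces $L\subseteq Y$, but $Y\cdot W_1^{\,2}=1>0$ and your argument yields nothing.

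The repair is precisely the paper's staircase. With $V$ the intersection of general members of $|W_1|,\dots,|W_{k-1}|$ one gets $W_1\cdots W_k\cdot Y=k!\bigl(\sum_{1\in I}a_I-b_1\bigr)=-k!\beta$; Lemma~\ref{lem:bs}(iii), applied iteratively to the \emph{consecutive} systems $|W_s|,|W_{s+1}|$ (it plays no role at all if every divisor lies in $|W_1|$), shows that $L$ is the only component of $V\cap Y$ inside the base locus of $|W_k|$; pairing with $W_k$ and using $W_k\cdot L=-k$ then gives $i(L,V\cdot Y;X_1^n)\ge(k-1)!\,\beta$. Finally Fulton's formula reads $i(L,V\cdot Y)=e_L(W_1)\cdots e_L(W_{k-1})\,e_L(Y)+q=(k-1)!\,e_L(Y)+q$ by Lemma~\ref{lem:bs}(ii), and $q=0$ by Lemma~\ref{lem:tor}, so the factor $(k-1)!$ coming from the multiplicities of the $W_s$ along $L$ cancels exactly against the $(k-1)!$ in the lower bound, yielding $e_L(Y)\ge\beta$. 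In your version all multiplicity factors collapse to $1$ while the intersection number stays inflated by $k!$, and the two sides can no longer be matched; no bookkeeping of $q$ or of the residual components can recover the lost factor.
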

\begin{proof}
We recall that for any $1\leq s\leq n-1$, we 
set $W_s := \sum_{i=2}^nH_i - sE_1$. We have
\[
\begin{array}{rcl}
W_1\cdot W_2\ \cdots\  W_k\cdot Y
& = & ((\sum_{i=2}^nH_i)^k - (k!)E_{1,n-k})\cdot Y\\[3mm]
& = & k!\left(\displaystyle
\sum_{\genfrac{}{}{0pt}{}{I\subseteq\{2,\dots,n\}}{|I| = k}}
H_I-E_{1,n-k}\right)\cdot Y\\[6mm]
& = & k!\left(\displaystyle\sum_{1\in I}a_I - b_1\right)\\[4mm]
& = & -k! \beta < 0.
\end{array}
\]

By Lemma~\ref{lem:bs}, the general variety $V$ in the intersection product $W_1 \ 
\cdots \ W_{k-1}$ has codimension $k-1$, as the divisors $W_s$ are general. 
For the same
reason the intersection $V\cap Y$ has dimension $1$ and it contains $L$ as an
irreducible component. As explained in \cite{Ful}*{p. 118}, the intersection multiplicity $i(L, V\cdot Y; X_1^n)$ is the coefficient of $L$ in the intersection class $V \cdot Y$.

By Lemma~\ref{lem:bs}
$(iii)$, $L$ is the only irreducible component
of $V\cap Y$ contained in the base locus of
$|W_k|$, so that the intersection product
of $W_k$ with every irreducible component
but $L$ is nonnegative. We then have the following
inequality
\[
 -k!\beta
 = W_k\cdot(V\cdot Y)
 \geq 
 i(L, V \cdot Y; X_1^n)(W_k\cdot L),
\]
and since $W_k\cdot L = -k$, we deduce that 
\begin{equation}
    \label{eq:q}
    i(L, V \cdot Y; X_1^n) \geq (k-1)!\beta.
\end{equation}
Let us consider the commutative diagram
\[
\begin{tikzcd}
E \arrow[r, hookrightarrow, "j"] \arrow[d, "\eta"'] & \tilde{X} 
\arrow[d, "\sigma"]\\
L \arrow[r, hookrightarrow] & X_1^n
\end{tikzcd}
\]  
where $\sigma$ is the blowing up of $L$, while $\eta$ is the projective bundle morphism.
By~\cite{Ful}*{Ex.~12.4.4}, we have
\[
i(L,V\cdot Y;X_1^n) = i(L, W_1 \ \cdots \ W_k 
\cdot Y; X_1^n) = e_L(W_1)\ \cdots\ e_L(W_{k-1})\cdot e_L(Y) + q,
\]
where $q$ is the coefficient of $[L]$ in the cycle
$\eta_*(j^*(\tilde V\cdot\tilde Y))$.
We claim that in our case $q=0$, so that, 
since by Lemma~\ref{lem:bs} 
$(ii)$, $e_L(W_s) = s$ for any $s\geq 1$, substituting in~\eqref{eq:q} we get the statement, $e_L(Y)\geq \beta$.

In order to prove the claim, we first 
recall that $E$ is isomorphic to  $\pp^1
\times\pp^{n-2}$. The pull-backs $j^*(\tilde V)$ and $j^*(\tilde Y)$ cut on a general $\pp^{n-2}$
two cycles of codimension $k-1$ and $n-k$ respectively. We conclude by
showing that these two cycles do not intersect,
so that the pushforward $\eta_*(j^*(\tilde V\cdot\tilde Y))$ is $0$-dimensional, and in
particular the coefficient $q$ of $[L]$ must be 
$0$.
Indeed, by Lemma~\ref{lem:tor}, 
$j^*(\tilde W_s)$ cuts on $\pp^{n-2}$
the linear system of hypersurfaces of degree $s$,
with $n-1$ points of multiplicity $s-1$. 
This implies that $j^*(\tilde V)$ does
not intersect the $(k-3)$-dimensional 
base locus of $j^*(\tilde W_{k-1})$ (which is 
the union of 
$(k-3)$-dimensional linear spaces). Hence 
$j^*(\tilde{V})$ and $j^*(\tilde{Y})$ do not intersect 
on a general $\pp^{n-2}$.

\end{proof}
We are now able to give an upper bound 
that works for any $n$ and $k$ (even if
it is not sharp in general).
\begin{theo}
\label{thm:lb}
    The cone $\bEff_{k}(X^n_r)$ is fiber-generated for any $r \leq n-k+1$.
\end{theo}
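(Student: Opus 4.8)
The plan is to verify the numerical criterion of Lemma~\ref{prop:ineq}: fixing a purely $k$-dimensional subvariety $Y$ not contained in an exceptional divisor, with class $[Y]=\sum_{|I|=n-k}a_IH_I-\sum_{j=1}^rb_jE_{j,k}$ and $a_I,b_j\ge 0$, I must show $\sum_{|I|=n-k}a_I\ge\sum_{j=1}^rb_j$. Since the case $r\le 2$ is already covered by Proposition~\ref{prop:tor}, I may assume $3\le r\le n-k+1$ (so in particular $k\le n-2$). For a coordinate direction $d\in\{1,\dots,n\}$ I write $L_j^{(d)}$ for the strict transform of the direction-$d$ fiber through $q_j$, a curve of class $H_{\{1,\dots,n\}\setminus\{d\}}-E_{j,1}$; because the $q_j$ are very general, these $r$ curves are pairwise disjoint and each one meets only its own exceptional divisor.

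The first substantive step is to globalise Lemma~\ref{lem:con} over all $r$ points simultaneously. Fixing $d$, I would consider, in the spirit of the divisors $W_s$, the classes $\tilde W_s:=\sum_{i\ne d}H_i-s\sum_{j=1}^rE_j$, whose base locus is the union of the fibers $L_j^{(d)}$ (Lemma~\ref{lem:bs} applied at each point, which is legitimate since the points are very general). As $E_i\cdot E_j=0$ for $i\ne j$, the product collapses exactly as in Lemma~\ref{lem:con}, to $\tilde W_1\cdots\tilde W_k=\left(\sum_{i\ne d}H_i\right)^k-k!\sum_{j=1}^rE_{j,n-k}$, whence
\[
\tilde W_1\cdots\tilde W_k\cdot Y=k!\left(\sum_{d\in I}a_I-\sum_{j=1}^rb_j\right).
\]
Running the Segre-class and intersection-multiplicity computation of Lemma~\ref{lem:con} in parallel along the disjoint components $L_j^{(d)}$ — each contributing $\tilde W_k\cdot L_j^{(d)}=-k$, with the correction term $q$ of \cite{Ful}*{Ex.~12.4.4} vanishing for the same reason as in that lemma — I expect to obtain, for every direction $d$, the lower bound $\sum_{j=1}^r e_{L_j^{(d)}}(Y)\ge\sum_{j=1}^r b_j-\sum_{d\in I}a_I$ on the total multiplicity of $Y$ along the direction-$d$ fibers through the points.

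It then remains to turn this lower bound into the desired inequality, and this is exactly the step where the hypothesis $r\le n-k+1$ enters; it is the main obstacle. The multiplicity $e_{L_j^{(d)}}(Y)$ is controlled by the tangent cone of $Y$ at $q_j$, a $k$-dimensional cone of degree $b_j$, and a direction-$d$ fiber can be absorbed only by the components $H_I$ with $d\notin I$; accordingly I would aim to prove a complementary upper bound of the form $\sum_{j=1}^r e_{L_j^{(d)}}(Y)\le\sum_{d\notin I}a_I$, whose sum with the lower bound above gives $\sum_{|I|=n-k}a_I\ge\sum_{j=1}^r b_j$, as required. The crux is that such an upper bound cannot hold for arbitrarily many points — otherwise every $\bEff_k(X^n_r)$ would be fiber-generated — so the argument must feed in a general-position input: a general $(n-k)$-dimensional fiber (equivalently, a general $(n-k)$-plane of directions in the projectivised tangent cone, which slices a $(k-1)$-dimensional cone to points) passes through at most $n-k+1$ of the very general points, so at most $n-k+1$ of the fiber conditions can be forced before overdetermining the degree of the cone. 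Making this count precise, and checking that the threshold is exactly $n-k+1$, is the hard part; everything else is the bookkeeping of the two intersection estimates.
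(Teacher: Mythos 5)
Your reduction to the numerical criterion of Lemma~\ref{prop:ineq} is the right starting point, but the first substantive step of your argument fails: the multi-point divisors $\tilde W_s=\sum_{i\ne d}H_i-s\sum_{j=1}^rE_j$ generally do not exist. The monomial analysis behind Lemma~\ref{lem:bs} is toric and works only for the single torus-fixed point $q_1$; it cannot be ``applied at each point'' when the points are very general, because imposing multiplicity $s$ at several points simultaneously cuts down the system $|\sum_{i\ne d}H_i|$ far below what the one-point count suggests. Concretely, in the very first case not already covered by Proposition~\ref{prop:tor} and Theorem~\ref{thm:linear}, namely $n=4$, $k=2$, $r=3$, the system $|H_1+H_2+H_3-2E_1-2E_2|$ is already empty: its members are pullbacks of multilinear forms on $(\pp^1)^3$, multiplicity $2$ at the first point leaves only the span of $t_1t_2,\,t_1t_3,\,t_2t_3,\,t_1t_2t_3$, and the four conditions imposed by a second general point have nonvanishing determinant, so $\tilde W_2$ is empty a fortiori for $r=3$. (Note also that even for one point the base locus of $|W_s|$ with $s\ge 2$ is a union of $s$-dimensional fibers, not just the curve $L$; handling this is exactly the role of Lemma~\ref{lem:bs}(iii) and Lemma~\ref{lem:tor} inside Lemma~\ref{lem:con}.) Without actual general members with controlled base loci, the Segre-class computation of Lemma~\ref{lem:con} has nothing to run on. What you can legitimately extract, by applying Lemma~\ref{lem:con} at each $q_j$ separately after blowing down the other exceptional divisors, is only $e_{L_j^{(d)}}(Y)\ge b_j-\sum_{d\in I}a_I$ for each $j$, whose sum carries the term $-r\sum_{d\in I}a_I$ and is strictly weaker than the parallel bound you claim.

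The second step is an acknowledged gap rather than a proof: the complementary upper bound $\sum_j e_{L_j^{(d)}}(Y)\le\sum_{d\notin I}a_I$ is precisely where the hypothesis $r\le n-k+1$ would have to enter, and the heuristic about a general $(n-k)$-plane of directions in the projectivised tangent cone does not isolate that threshold, nor explain why the bound should hold for a $k$-dimensional variety that is allowed to be very singular along the fibers. The paper sidesteps both difficulties by a completely different mechanism: induction on $n$, specializing $q_1,\dots,q_{r-1}$ into a single fiber $F\cong(\pp^1)^{n-1}$ of $p_{\{1\}}$ with $q_r$ outside it. If $Y\subseteq F'$ one concludes by induction in $X^{n-1}_{r-1}$; otherwise one applies Lemma~\ref{lem:con} exactly once, at $q_r$, to show that $Z=Y\cap F'$ has multiplicity at least $b_r-\sum_{1\in I}a_I$ at the point $\bar q_r=L\cap F$, so that its strict transform is an effective $(k-1)$-cycle in $X^{n-1}_r$, where the inductive hypothesis applies because $r\le(n-1)-(k-1)+1$; the very general case then follows by semicontinuity. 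In other words, the hypothesis $r\le n-k+1$ enters through the numerics of a degeneration-plus-induction, not through any multiplicity upper bound, and some such mechanism is what your proposal is missing.
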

\begin{proof}
By Theorem~\ref{thm:linear} and 
Proposition~\ref{prop:tor} the statement is
true when $k=1$ or $k=n-1$, so that in what follows
we can assume $2\leq k\leq n-2$.
We proceed by induction on $n$ as in 
\cite{CLO}*{Thm.~4.3}, the base of induction 
being $n=2$.
Assume the theorem is true for $\bEff_k(X_r^m)$, 
for any $k<m<n$. Let $\Gamma\subseteq (\pp^1)^n$ 
be a set 
consisting of $r-1$ very general points $q_1, 
\ldots, q_{r-1}$ in a fiber $F$ of the projection 
morphism $p_{\{1\}}: (\pp^1)^n \to {\mathbb P}^1$, 
and a very general point $q_r$ not contained 
in $F$. Notice that 
$F \simeq (\pp^1)^{n-1}$.
We denote by $F'$ be the proper transform of $F$ in 
$X_{\Gamma}^n$, the blow up of 
$(\pp^1)^n$ along $\Gamma$,
so that the class 
of $F'$ in $X_{\Gamma}^n$
is $H_1-\sum_{j=1}^{r-1}E_j$. 

Let $Y$ be an irreducible $k$-dimensional subvariety of $X_{\Gamma}^n$, not contained in an exceptional divisor, so that
$$
[Y] = 
\sum_{|I|=n-k}a_IH_I - \sum_{j=1}^rb_jE_{j,k}, 
$$
where $a_I, b_j$ are nonnegative integers. By 
Proposition~\ref{prop:ineq}
we need to prove that 
\begin{equation}
\label{eq:ineq}
\sum_{|I|=n-k}a_I \geq \sum_{j=1}^r b_j.
\end{equation}
We distinguish two cases.\\
(i) $Y$ is contained in $F'$. Since the points 
$q_1,\dots,q_{r-1}$ are very general in $F$
we have that $F' \simeq X^{n-1}_{r-1}$. 
By the induction hypothesis, 
$\bEff_k(X^{n-1}_{r-1})$ is fiber-generated,
so that inequality~\eqref{eq:ineq} holds.\\
(ii) $Y$ is not contained in $F'$. In this case
we set $Z := Y \cap F'$, so that $Z$ 
is an effective cycle of dimension $k-1$ in 
$X^{n-1}_{r-1}$. 
The $1$-dimensional fiber 
of the projection $\pi_{\{2,\dots,n\}}: ({\mathbb P^1})^{n} \to ({\mathbb P^1})^{n-1}$, passing through the point $q_r$
intersects $F$ in a unique point $\bar q_r$.  
The blow up of $F'$ at the point 
corresponding to $\bar q_r$ coincides with 
the blow-up $X_{r}^{n-1}$ of $F$ along 
the $r$ very general points $q_1, \ldots, q_{r-1},\bar q_r$. 
By Lemma~\ref{lem:con}, $Z$ has multiplicity
$\bar b_r\geq \beta =\max(0,b_r-\sum_{1\in I}
a_I)$ at $\bar q_r$. Therefore 
the strict transform $Z'$ of $Z$ is a $(k-1)$-dimensional effective cycle in $X^{n-1}_r$,
with class
\[
[Z']
=
\sum_{1\notin I}a_IH_I
-
\sum_{j=1}^{r-1}b_jE_{j,k-1} - \bar b_rE_{r,k-1},
\]
(with a slight abuse of notation we are 
considering the 
classes $H_I$ of $N_{k-1}(X^{n}_r)$, such that 
$1\in I$, as classes in $N_{k-1}(X^{n-1}_r)$).
Notice that $r \leq (n-1)-(k-1)+1$, so the inductive procedure can be applied. Therefore, $\bEff_{k-1}(X^{n-1}_r)$ is fiber-generated and 
the coefficients of $[Z']$ satisfy
the inequality
$\sum_{1\notin I}a_I
\geq
\sum_{j=1}^{r-1}b_j+\bar b_r$.
We can then write
\[
0 \leq 
\sum_{1\notin I}a_I-
\sum_{j=1}^{r-1}b_j - \bar b_r
\leq 
\sum_{1\notin I}a_I-
\sum_{j=1}^{r-1}b_j - (b_r - 
\sum_{1\in I}a_I)
=
\sum a_I - \sum_{j=1}^r b_j,
\]
as required.
Hence we can apply Lemma \ref{lem:num} and deduce that  $\bEff_k(X^n_{\Gamma})$ is fiber-generated. Reasoning 
by semicontinuity as in~\cite{CLO}*{Cor.~2.3}, we conclude that $\bEff_k(X_r^n)$
is fiber-generated too.

\end{proof}

\section{Other results}
In this section we first show that for any $n$ and $k$, if
we blow up a sufficiently big number $r$ of points, the
cone $\bEff_k(X_r^n)$ is not fiber-generated. As a consequence, we 
consider the particular case of $\bEff_2(X_4^4)$, and 
finally we discuss Question~\ref{que:md}.
\begin{prop}
\label{prop:not}
The cone $\bEff_k(X_r^n)$ is not fiber-generated for
$r > (n-k+1)!$.
\end{prop}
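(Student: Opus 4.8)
The strategy is to deduce non–fiber-generation in dimension $k$ from the curve case, by pulling back through a coordinate projection a curve that already violates fiber-generation in a smaller product. Put $m:=n-k+1$, let $p\colon(\pp^1)^n\to(\pp^1)^m$ be the projection onto the first $m$ factors, and note that the images $p(q_1),\dots,p(q_r)$ are $r$ very general points of $(\pp^1)^m$ (the relevant countable union of proper subvarieties of $((\pp^1)^m)^r$ pulls back to a countable union of proper subvarieties of $((\pp^1)^n)^r$). Since $r>(n-k+1)!=m!$, Theorem~\ref{thm:linear} tells us that $\bEff_1(X^m_r)$ is \emph{not} fiber-generated; hence, by the contrapositive of Lemma~\ref{prop:ineq} (applied with $k=1$ and $n=m$), there is an irreducible curve $C\subseteq(\pp^1)^m$, whose strict transform is not contained in an exceptional divisor, of multidegree $(d_1,\dots,d_m)$ (so that $C\cdot H_i=d_i$) and passing through $p(q_j)$ with multiplicity $\mu_j$, such that
\[
\sum_{i=1}^m d_i \;<\; \sum_{j=1}^r \mu_j .
\]

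Next I would set $Y:=p^{-1}(C)\subseteq(\pp^1)^n$. Since $p$ is a smooth morphism with irreducible fibres $(\pp^1)^{k-1}$ and $C$ is irreducible, $Y$ is an irreducible subvariety of dimension $1+(n-m)=k$, not contained in any exceptional divisor of $X^n_r$. Locally around $q_j$ the morphism $p$ is a linear projection and $Y=C\times(\pp^1)^{k-1}$, so multiplicity is multiplicative and $\mult_{q_j}Y=\mult_{p(q_j)}C=\mu_j$. Thus, writing $[Y]=\sum_{|I|=n-k}a_IH_I-\sum_{j=1}^r b_jE_{j,k}$ for the class of the strict transform, we get $b_j=\mu_j$, whence $\sum_j b_j=\sum_j\mu_j$.

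It remains to compute $\sum_I a_I$. Using that the intersection matrix of Remark~\ref{rem:int} is block diagonal, one has $a_I=[Y]\cdot H_{I^c}$ for $I^c:=\{1,\dots,n\}\setminus I$, and $\sum_{|J|=k}H_J=\tfrac1{k!}\bigl(\sum_{i=1}^n H_i\bigr)^k$ (a consequence of $H_i^2=0$ and $H_iH_j=H_{\{i,j\}}$). Therefore
\[
\sum_{|I|=n-k}a_I \;=\; [Y]\cdot\sum_{|J|=k}H_J \;=\; \frac{1}{k!}\,[Y]\cdot\Bigl(\sum_{i=1}^n H_i\Bigr)^{k}.
\]
Intersecting $Y=C\times(\pp^1)^{k-1}$ with this class, the only surviving squarefree monomials are $H_a\,H_{m+1}\cdots H_n$ with $1\le a\le m$, since the $k-1$ factors $H_{m+1},\dots,H_n$ are forced to cut the fibre $(\pp^1)^{k-1}$ down to a point, leaving a single factor to meet the curve; each such monomial appears with multinomial coefficient $k!$ and contributes $k!\,d_a$. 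Hence $\sum_I a_I=\sum_{a=1}^m d_a$, and combining with the previous step,
\[
\sum_{|I|=n-k}a_I=\sum_{i=1}^m d_i<\sum_{j=1}^r\mu_j=\sum_{j=1}^r b_j,
\]
so by Lemma~\ref{prop:ineq} the cone $\bEff_k(X^n_r)$ is not fiber-generated.

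The main technical point to verify carefully is the class computation for the pulled-back cycle $p^{-1}(C)$: namely that its fibre coefficients sum to the total degree $\sum_i d_i$ of $C$, and that its multiplicities at the $q_j$ coincide with those of $C$ at $p(q_j)$. Everything else is a bookkeeping transfer of the curve statement through the smooth projection $p$, including the routine check that projecting very general points keeps them very general. As a consistency check, for $k=1$ one has $m=n$ and $p=\mathrm{id}$, recovering Theorem~\ref{thm:linear}, while for $k=n-1$ one has $m=2$ and $m!=2$, recovering Corollary~\ref{cor:div}.
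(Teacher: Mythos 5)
Your proof is correct and follows essentially the same route as the paper's: both project to $(\pp^1)^{n-k+1}$, invoke Theorem~\ref{thm:linear} to produce a curve violating the numerical inequality of Lemma~\ref{prop:ineq}, and lift it to the $k$-dimensional cycle $C\times(\pp^1)^{k-1}$ whose class still violates that inequality. Your write-up merely makes explicit the routine verifications (very generality of the projected points, multiplicativity of multiplicities, and the computation $\sum_I a_I=\sum_i d_i$) that the paper leaves implicit.
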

\begin{proof}
If $k = 1$ or $k = n-1$, the assertion follows from
Theorem~\ref{thm:linear} and    
Corollary~\ref{cor:div}, so we can restrict to the 
case $2\leq k\leq n-2$. Let us fix the projection
$\pi_J \colon (\pp^1)^n \to (\pp^1)^{n-k+1}$, where
$J = \{1,\dots,n-k+1\}$, and let $X_r^{n-k+1}$ be the
blow up of  $(\pp^1)^{n-k+1}$ at the points 
$q_1',\dots,q_r'$, images of $q_1,\dots,
q_r$ via $\pi_J$. 
Since by hypothesis $r > (n-k+1)!$, by 
Theorem~\ref{thm:linear} the cone $\bEff_1(X_r^{n-k+1})$
is not fiber-generated, so that by 
Proposition~\ref{prop:ineq} there exists a
curve class $[C] = \sum_I a_IH_I - \sum_{j=1}^rb_jE_{j,1}$, where $I\subseteq
J$, and $|I| = n-k$, satisfying the inequality
\[
\sum_I a_I < \sum_{j=1}^rb_j.
\]
Observe that the image $C'$ of $C$ in $(\pp^1)^{n-k+1}$
is a curve passing through $q_j'$ with multiplicity at least $b_j$, for any $1\leq j\leq r$. 
The fiber product $Z:=(\pp^1)^{k-1}\times
C \subseteq (\pp^1)^{n}$ is a $k$-dimensional 
variety having multiplicity at least $b_j$ at each
point $q_j$, so that its class can be written as
$[Z] = \sum_I \alpha_IH_I - \sum_{j=1}^rb_jE_{j,k}$,
with $I\subseteq \{1,\dots,n\}$ and $|I| = n-k$. Since 
$Z$ is the product of $(\pp^1)^{k-1}$ and $C$,
we have that 
\[
\alpha_I = 
\left\{
\begin{array}{ll}
a_I & \text{ if } I\subseteq J,\\
0 & \text{ otherwise}
\end{array}
\right.
\]
and in particular the coefficients satisfy
$\sum_I \alpha_I = \sum_I a_I < \sum_{j=1}^rb_j$. 
Since $[Z]$ belongs to the cone $\bEff_k(X_r^n)$,
by Proposition~\ref{prop:ineq} we conclude that the latter is not fiber-generated.
\end{proof}
As we pointed out in the Introduction,
when $n=4$, Theorem~\ref{thm:bound} 
and the above proposition show that
only three cases remain open, i.e. 
$\bEff_2(X_r^4)$, for $r = 4,5,6$. 
We are now going to deal with the first of 
these cases.
\begin{prop}
\label{prop:4-4}
The cone $\bEff_2(X^4_4)$ is fiber-generated.
\end{prop}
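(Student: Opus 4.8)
Since the inductive degeneration behind Theorem~\ref{thm:lb} would here force the fiber-generation of $\bEff_2(X_3^3)$, which fails by Corollary~\ref{cor:div}, I would argue directly on the very general configuration, through the numerical criterion of Proposition~\ref{prop:ineq}: it suffices to show that every irreducible surface $Y\subseteq X_4^4$ not contained in an exceptional divisor, with class $[Y]=\sum_{|I|=2}a_IH_I-\sum_{j=1}^4 b_jE_{j,2}$ and $a_I,b_j\ge 0$, satisfies $\sum_{|I|=2}a_I\ge\sum_{j=1}^4 b_j$. The main device is the intersection of $Y$ with the fiber surfaces through the points: for a point $q_j$ and a pair $\{a,b\}$, let $F_j^{ab}$ be the strict transform of the fiber of $p_{\{a,b\}}$ through $q_j$, whose class is $H_{\{a,b\}}-E_{j,2}$. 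From the intersection table of Remark~\ref{rem:int} one gets
\[
[Y]\cdot\bigl(H_{\{a,b\}}-E_{j,2}\bigr)=a_{\{a,b\}^c}-b_j .
\]
Hence, whenever $Y$ and $F_j^{ab}$ meet in finitely many points, the positivity recalled in the Preliminaries gives $a_{\{a,b\}^c}\ge b_j$. The aim is to produce, for each of the four points, one such bound with \emph{pairwise distinct} pairs $\{a,b\}^c$, so that summing yields $\sum_j b_j\le\sum_{|I|=2}a_I$; here the inequality $r=4\le\binom 42=6$ is exactly what leaves enough room to make the choice.

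To decide when the intersection is proper I would use Lemma~\ref{lem:con}. Writing $s_\ell:=\sum_{\ell\in I}a_I$, its coordinate-symmetric form shows that $Y$ is forced to contain the one-dimensional fiber $D^j_\ell$ through $q_j$ in the $\ell$-th direction precisely when $b_j>s_\ell$; and a curve shared by $Y$ and $F_j^{ab}$ must lie on the $(\pp^1)^2$ obtained by fixing the coordinates $a,b$, so the improperness of $Y\cap F_j^{ab}$ is governed by the fibers $D^j_\ell$ with $\ell\in\{a,b\}^c$ that $Y$ contains. Setting $T_j:=\{\ell:\ b_j>s_\ell\}$, the surface $F_j^{ab}$ meets $Y$ properly as soon as $\{a,b\}^c\subseteq T_j^{\,c}$. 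Thus, when $|T_j|\le 2$ for every $j$, each point admits an admissible pair $P_j\subseteq T_j^{\,c}$, and I would invoke a Hall-type matching to select these $P_j$ pairwise distinct, obtaining $b_j\le a_{P_j}$ and the desired inequality after summation; the identity $\sum_\ell s_\ell=2\sum_I a_I$ is what prevents all the sets $T_j$ from being large at once.

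The main obstacle is a point with $|T_j|\ge 3$, i.e. a surface forced to contain three or four coordinate fibers through $q_j$: then no $\{a,b\}^c$ avoids $T_j$, every $F_j^{ab}$ meets $Y$ along a curve, and the naive bound $a_{\{a,b\}^c}\ge b_j$ breaks down, the intersection number even becoming negative — exactly the phenomenon responsible for the violating divisors of $\bEff_2(X_3^3)$. Here I would not discard the excess but read off the compensating information inside Lemma~\ref{lem:con}: it bounds below the multiplicity $e_{D^j_\ell}(Y)\ge b_j-s_\ell$ along each contained fiber, and those fibers contribute precisely the excess terms in $[Y]\cdot F_j^{ab}$. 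Quantifying this excess via the blow-up of $X_4^4$ along $D^j_\ell$ and the pushforward diagram used in the proof of Lemma~\ref{lem:con} should trade the large multiplicity $b_j$ back for degree in the remaining directions, keeping the total budget $2\sum_I a_I=\sum_\ell s_\ell$ above $\sum_j b_j$. Making this bookkeeping uniform over the combinatorial types of the $T_j$ is the delicate step, and it is exactly what confines the method to $r=4$, the cases $r=5,6$ remaining open because the same estimate no longer closes.
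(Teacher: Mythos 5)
Your reduction to the numerical criterion of Proposition~\ref{prop:ineq} and your opening remark that the induction behind Theorem~\ref{thm:lb} cannot reach $r=4$ (since it would require $\bEff_2(X_3^3)$ to be fiber-generated, contradicting Corollary~\ref{cor:div}) are both correct, but the core of your argument has two genuine gaps. The first is the properness criterion: you assert that $Y\cap F_j^{ab}$ is finite as soon as $\{a,b\}^c\subseteq T_j^c$, i.e.\ as soon as Lemma~\ref{lem:con} does not force $Y$ to contain the coordinate fibers $D^j_\ell$ with $\ell\in\{a,b\}^c$. This does not follow. A curve in $Y\cap F_j^{ab}$ merely lies in the surface $F_j^{ab}$, which is a blow-up of $\pp^1\times\pp^1$ at one point; it need not be a coordinate fiber, nor even pass through $q_j$ (for instance a curve of type $(1,1)$ in $F_j^{ab}$ missing $q_j$). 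Lemma~\ref{lem:con} gives only one implication --- a negative intersection number forces containment of a fiber --- not the converse you need. Since $F_j^{ab}$ is rigid (it is the unique effective cycle in its class $H_{\{a,b\}}-E_{j,2}$), you cannot move it to make the intersection proper, and without properness the inequality $a_{\{a,b\}^c}\ge b_j$ is simply unavailable. The second gap is decisive: the case $|T_j|\ge 3$ is not proved at all. The phrases ``should trade the large multiplicity $b_j$ back for degree'' and ``the delicate step'' mark exactly the point where the excess-intersection bookkeeping would have to be carried out, and it is not; as it stands the proposal is a program, not a proof. (Your Hall-type matching step is also fragile: two points with the same two-element set $T_j$ force the same pair $P_j$, and the identity $\sum_\ell s_\ell = 2\sum_I a_I$ alone does not rule this out.)

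The paper's proof avoids rigid cycles altogether, which is precisely what removes the excess-intersection problem you run into. It takes the two divisors $D_1,D_2$ with classes $H_1+H_2+H_3+H_4-2E_1-2E_2-E_3-E_4$ and $H_1+H_2+H_3+H_4-E_1-E_2-2E_3-2E_4$; a MAGMA computation shows that the base locus of the three-dimensional linear system $|D_1|$ is a curve (eight fibers through $q_1,q_2$ plus two rational normal quartics), no component of which lies in $D_2$, so that $D_1|_{D_2}$ is nef. Then for any surface $Y$ not contained in $D_2$ one gets $0\le Y|_{D_2}\cdot D_1|_{D_2}=2\bigl(\sum_{|I|=2}a_I-\sum_k b_k\bigr)$, symmetrically if $Y\not\subseteq D_1$, and the only surface contained in $D_1\cap D_2$ is $D_1\cap D_2$ itself (it is irreducible), whose coefficients satisfy the inequality directly; Lemma~\ref{lem:num} concludes. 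If you want to salvage your approach, you would need a substitute for the rigid fibers $F_j^{ab}$ --- some moving family of surfaces, or divisors restricted to divisors as in the paper --- before the combinatorial matching can even get started.
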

\begin{proof}
Let us consider two divisors 
$D_1$ and $D_2$ having classes 
\begin{align*}
    [D_1] & = 
    H_1+H_2+H_3+H_4-2E_1-2E_2-E_3-E_4,\\
    [D_2] & = 
    H_1+H_2+H_3+H_4-E_1-E_2-2E_3-2E_4,
\end{align*}
i.e. $D_1$ (resp. $D_2$) can be described
as the intersection of $X$ with
a hyperplane passing through $q_3$ and $q_4$ 
(resp. $q_1$ and $q_2$) and tangent
to $X$ at $q_1$ and $q_2$
(resp. at $q_3$ and $q_4$).
We have that $\dim |D_1| = 3$ and its base 
locus is $1$-dimensional. Indeed, with the 
help of computer algebra system MAGMA~\cite{mag}
it is possible to see that it is the  
union of the $8$ fibers through 
$q_1$ and $q_2$ and $2$ 
rational normal curves of degree
$4$. Since none
of the components of the base locus is 
contained in $D_2$, we have that $D_1|_{D_2}$
has only finitely many base points
and in particular it is nef.

Let us consider now a surface $Y$ with class
$\sum_{|I|=2} a_IH_I - \sum b_kE_{k,2}$. If $Y$ is not contained in $D_2$,
its restriction to $D_2$ is an
effective $1$-cycle, so that its 
intersection product with $D_1|_{D_2}$ is non-negative.
Therefore we can write
\[
 0 \leq 
Y|_{D_2}\cdot D_1|_{D_2}
=
2\left(\sum_{|I|=2}
a_I - \sum_{k=1}^4b_k\right)
\]
and by Lemma~\ref{lem:num} we
deduce that $[Y]$ lies in the cone 
spanned by fiber classes.

If $Y$ is not contained in $D_1$
we can conclude in the same way,
so that we only have to consider
the case in which $Y\subseteq
D_1\cap D_2$. Since the latter
intersection is irreducible, we
have $Y = D_1\cap D_2$, so that 
$a_I = 2$ for any $I$ such that 
$|I|=2$, and $b_k = 2$ for any
$1\leq k\leq 4$. Therefore
$\sum a_I - \sum b_k = 4 > 0$
and again we can apply 
Lemma~\ref{lem:num}.
\end{proof}
Let us go back now to Question~\ref{que:md}.
As proved in~\cite{CT}, $X_r^n$ is a
Mori dream space exactly in the following cases:
\begin{center}
\begin{longtable}{|c|c|c|c|c|}
\hline
$n$ & $2$ & $3$ & $4$  & $\geq 5$ \\
\hline
$r \leq $ & $7$ & $6$ & $5$ &  $4$\\
\hline
\end{longtable}
\end{center}
Clearly, if $n = 2$ there is nothing to prove, while
if $n=3$ an affirmative answer to Question~\ref{que:md} 
follows from Theorem~\ref{thm:linear}. 
If $n = 4$, by the same theorem and 
Proposition~\ref{prop:4-4}, the only open case
is $k=2,\, r = 5$. Finally, if $n\geq 5$ and $r\leq 4$,
the bound of Theorem~\ref{thm:lb} implies that 
for any $k\leq n-3$ the pseudoeffective cone
$\bEff_k(X_r^n)$ is fiber-generated, so that in 
particular it is polyhedral. Therefore
the only open case is $k=n-2$ and $r=4$.
We remark that, as proved in Lemma \ref{lem:fg}, $\bEff_k(X_r^n)$ is fiber-generated if and only if the numerical class in Lemma \ref{lem:fg} is nef. Therefore, it is natural to ask the following.
\begin{quest}
Is the numerical class 
$\sum_{|I| = n-2}H_I - \sum_{j = 1}^rE_{j,2}$
nef when $n=4,\, r= 5$ or $n\geq 5,\, r = 4$?
\end{quest}
An affirmative answer to this question would allow us 
to answer Question~\ref{que:md} too (see the Introduction). We hope to return to this problem in the next future.

\begin{bibdiv}
\begin{biblist}

\bib{mag}{article}{
   author={Bosma, W.},
   author={Cannon, J.},
   author={Playoust, C.},
   title={The Magma algebra system. I. The user language},
   note={Computational algebra and number theory (London, 1993)},
   journal={J. Symbolic Comput.},
   volume={24},
   date={1997},
   number={3-4},
   pages={235--265},
   issn={0747-7171},
   review={\MR{1484478}},
   doi={10.1006/jsco.1996.0125},
}

\bib{CT}{article}{
   author={Castravet, A.-M.},
   author={Tevelev, J.},
   title={Hilbert's 14th problem and Cox rings},
   journal={Compos. Math.},
   volume={142},
   date={2006},
   number={6},
   pages={1479--1498},
   issn={0010-437X},
   review={\MR{2278756}},
   doi={10.1112/S0010437X06002284},
}

\bib{CLO}{article}{
   author={Coskun, I.},
   author={Lesieutre, J.},
   author={Ottem, J. C.},
   title={Effective cones of cycles on blowups of projective space},
   journal={Algebra Number Theory},
   volume={10},
   date={2016},
   number={9},
   pages={1983--2014},
   issn={1937-0652},
   review={\MR{3576118}},
   doi={10.2140/ant.2016.10.1983},
}

\bib{DELV}{article}{
   author={Debarre, O.},
   author={Ein, L.},
   author={Lazarsfeld, R.},
   author={Voisin, C.},
   title={Pseudoeffective and nef classes on abelian varieties},
   journal={Compos. Math.},
   volume={147},
   date={2011},
   number={6},
   pages={1793--1818},
   issn={0010-437X},
   review={\MR{2862063}},
   doi={10.1112/S0010437X11005227},
}

\bib{FL}{article}{
   author={Fulger, M.},
   author={Lehmann, B.},
   title={Positive cones of dual cycle classes},
   journal={Algebr. Geom.},
   volume={4},
   date={2017},
   number={1},
   pages={1--28},
   issn={2313-1691},
   review={\MR{3592463}},
   doi={10.14231/AG-2017-001},
}

\bib{Ful}{book}{
   author={Fulton, William},
   title={Intersection theory},
   series={Ergebnisse der Mathematik und ihrer Grenzgebiete. 3. Folge. A
   Series of Modern Surveys in Mathematics [Results in Mathematics and
   Related Areas. 3rd Series. A Series of Modern Surveys in Mathematics]},
   volume={2},
   edition={2},
   publisher={Springer-Verlag, Berlin},
   date={1998},
   pages={xiv+470},
   isbn={3-540-62046-X},
   isbn={0-387-98549-2},
   review={\MR{1644323}},
   doi={10.1007/978-1-4612-1700-8},
}

\bib{HK}{article}{
   author={Hu, Y.},
   author={Keel, S.},
   title={Mori dream spaces and GIT},
   note={Dedicated to William Fulton on the occasion of his 60th birthday},
   journal={Michigan Math. J.},
   volume={48},
   date={2000},
   pages={331--348},
   issn={0026-2285},
   review={\MR{1786494}},
   doi={10.1307/mmj/1030132722},
}

\bib{LM}{article}{
   author={Laface, A.},
   author={Moraga, J.},
   title={Linear systems on the blow-up of 
   $(\mathbb {P}^1)^n$},
   journal={Linear Algebra Appl.},
   volume={492},
   date={2016},
   pages={52--67},
   issn={0024-3795},
   review={\MR{3440147}},
   doi={10.1016/j.laa.2015.11.009},
}

\bib{Li}{article}{
   author={Li, Q.},
   title={Pseudo-effective and nef cones on spherical varieties},
   journal={Math. Z.},
   volume={280},
   date={2015},
   number={3-4},
   pages={945--979},
   issn={0025-5874},
   review={\MR{3369360}},
   doi={10.1007/s00209-015-1457-0},
}

\bib{O}{article}{
   author={Ottem, J. C. },
   title={Ample subvarieties and $q$-ample divisors},
   journal={Adv. Math.},
   volume={229},
   date={2012},
   number={5},
   pages={2868--2887},
   issn={0001-8708},
   review={\MR{2889149}},
   doi={10.1016/j.aim.2012.02.001},
}

\bib{P}{article}{
   author={Pintye, N.},
   author={Prendergast-Smith, A.},
   title={Effective cycles on some linear blowups of projective spaces},
   journal={Nagoya Math. J.},
   volume={243},
   date={2021},
   pages={243--262},
   issn={0027-7630},
   review={\MR{4298661}},
   doi={10.1017/nmj.2019.41},
}


\end{biblist}
\end{bibdiv}

\end{document}